\documentclass[12pt,american]{article}
\usepackage[T1]{fontenc}
\usepackage[utf8]{inputenc}
\usepackage{amsmath}
\usepackage{amsthm}
\usepackage{amssymb}
\usepackage{geometry}
\usepackage{setspace}
\usepackage[authoryear]{natbib}
\makeatletter
\newcommand{\lyxaddress}[1]{
	\par {\raggedright #1
	\vspace{1.4em}
	\noindent\par}
}
\theoremstyle{plain}
\newtheorem{thm}{\protect\theoremname}
\theoremstyle{definition}
\newtheorem{defn}[thm]{\protect\definitionname}
\theoremstyle{plain}
\newtheorem{cor}[thm]{\protect\corollaryname}
\theoremstyle{plain}
\newtheorem{lem}[thm]{\protect\lemmaname}
\theoremstyle{remark}
\newtheorem{rem}[thm]{\protect\remarkname}
\theoremstyle{plain}
\newtheorem{prop}[thm]{\protect\propositionname}

\usepackage{babel}

\makeatother

\providecommand{\corollaryname}{Corollary}
\providecommand{\definitionname}{Definition}
\providecommand{\lemmaname}{Lemma}
\providecommand{\propositionname}{Proposition}
\providecommand{\remarkname}{Remark}
\providecommand{\theoremname}{Theorem}

\begin{document}
\title{On the identifiability of Dirichlet mixture models}
\author{Hien Duy Nguyen$^{1}$ and Mayetri Gupta$^{2}$}
\maketitle

\lyxaddress{$^{1}$School of Computing, Engineering and Mathematical Sciences,
La Trobe University\\
Bundoora, VIC 3086, Australia\\
Institute of Mathematics for Industry, Kyushu University\\
Fukuoka 819-0395, Japan\\
 \texttt{H.Nguyen5@latrobe.edu.au}\\[0.8em]$^{2}$School of Mathematics
and Statistics, University of Glasgow\\
Glasgow G12 8SQ, UK\\
\texttt{Mayetri.Gupta@glasgow.ac.uk}}
\begin{abstract}
We study identifiability of finite mixtures of Dirichlet distributions
on the interior of the simplex. We first prove a shift identity showing
that every Dirichlet density can be written as a mixture of $J$ shifted
Dirichlet densities, where $J-1$ is the dimension of the simplex
support, which yields non-identifiability on the full parameter space.
We then show that identifiability is recovered on a fixed-total parameter
slice and on restricted box-type regions. On the full parameter space,
we prove that any nontrivial linear relation among Dirichlet kernels
must involve at least $J$ coefficients sharing a common sign, and
deduce that mixtures with fewer than $J$ atoms are identifiable.
We further report direct non-identifiability implications for unrestricted finite mixtures of generalized
Dirichlet, Dirichlet-multinomial, fixed-topic-matrix latent Dirichlet allocation,
Beta-Liouville, and inverted Beta-Liouville models. 
\end{abstract}
\textbf{Keywords:} Dirichlet distribution; finite mixture model; identifiability;
compositional data; generalized Dirichlet distribution; latent Dirichlet
allocation

\section{Introduction }

\label{sec:Introduction}

Dirichlet distributions are among the basic parametric families for
modeling random $J$-dimensional vectors on a probability simplex;
see, e.g., \citet[Ch. 49]{kotz2019continuous}. They arise naturally
in the analysis of compositional data and other simplex-valued observations.
When such data are generated by heterogeneous populations, finite
mixtures of Dirichlet distributions provide a natural model-based
clustering framework in the sense of \citet{McLachlanBasford1988}.
Dirichlet mixture models and related constructions have been used
in a range of applied settings; representative examples include \citet{bouguila2004unsupervised},
\citet{fan2012variational}, \citet{fan2016variational}, and \citet{pal2022clustering}.
In the special case $J=2$, Dirichlet mixtures reduce to mixtures
of beta distributions, which likewise arise in applications such as
bioinformatics, image analysis, and regression modeling; see \citet{houseman2008model},
\citet{ji2005applications}, \citet{ma2009beta}, and \citet{grun2012extended}.

For mixture-based clustering, identifiability is a basic structural
requirement. If identifiability fails, then the same mixture density
can admit genuinely different finite mixing distributions, so component-specific
interpretations and the induced clustering statements are not uniquely
determined. As usual, the relevant notion factors out label-switching
by identifying a finite mixture with its underlying finite mixing
measure; see \citet{teicher1963identifiability} and \citet[Sec. 3.1]{TitteringtonSmithMakov1985}.
The aim of this paper is to determine when finite mixtures of Dirichlet
distributions are identifiable and when they are not.

We now fix notation. Let $J\ge2$, and let
\[
\Delta_{J-1}^{\circ}=\left\{ x=(x_{1},\dots,x_{J})\in(0,1)^{J}:\sum_{j=1}^{J}x_{j}=1\right\} 
\]
be the interior of the $(J-1)$-simplex. Throughout, we view $\Delta_{J-1}^{\circ}$
through the coordinates $(x_{1},\dots,x_{J-1})$, with $x_{J}=1-\sum_{j=1}^{J-1}x_{j}$.
All densities and all ``almost everywhere'' statements on $\Delta_{J-1}^{\circ}$
are taken with respect to the corresponding $(J-1)$-dimensional Lebesgue
measure $dx_{1}\cdots dx_{J-1}$. For $\alpha=(\alpha_{1},\dots,\alpha_{J})\in\mathbb{R}_{>0}^{J}$,
write $\alpha_{+}=\sum_{j=1}^{J}\alpha_{j}$ and define the Dirichlet
density 
\begin{equation}
f_{\alpha}(x)=\frac{\Gamma(\alpha_{+})}{\prod_{j=1}^{J}\Gamma(\alpha_{j})}\prod_{j=1}^{J}x_{j}^{\alpha_{j}-1},\qquad x\in\Delta_{J-1}^{\circ}.\label{eq:Dirichlet_dens}
\end{equation}

A finite discrete mixing measure on $(0,\infty)^{J}$ is a probability
measure of the form 
\[
G=\sum_{k=1}^{K}\pi_{k}\,\delta_{\alpha^{(k)}},\qquad\pi_{k}>0,\ \sum_{k=1}^{K}\pi_{k}=1,
\]
where the atoms $\alpha^{(1)},\dots,\alpha^{(K)}$ are distinct and
$\delta_{\theta}$ denotes the Dirac measure at~$\theta\in\mathbb{R}_{>0}^{J}$.
The induced mixture density on $\Delta_{J-1}^{\circ}$ is 
\[
m_{G}(x)=\int f_{\alpha}(x)\,dG(\alpha)=\sum_{k=1}^{K}\pi_{k}f_{\alpha^{(k)}}(x).
\]
For $\Theta\subseteq\mathbb{R}_{>0}^{J}$, write 
\[
\mathcal{F}(\Theta)=\{m_{G}:G\text{ is a finite discrete mixing measure supported on }\Theta\},
\]
and write $\mathcal{F}=\mathcal{F}(\mathbb{R}_{>0}^{J})$ for brevity. 
\begin{defn}
Let $\Theta\subseteq\mathbb{R}_{>0}^{J}$. We say that the class of
finite Dirichlet mixtures supported on $\Theta$, $\mathcal{F}\left(\Theta\right)$,
is identifiable if whenever 
\[
m_{G}(x)=m_{G'}(x)\ \text{for almost every }x\in\Delta_{J-1}^{\circ}
\]
for two finite mixing measures $G,G'$ whose atoms lie in $\Theta$,
then $G=G'$ as measures. 
\end{defn}

For the one-dimensional simplex ($J=2$), it is well known that finite
mixtures of beta distributions are not identifiable; see, e.g., \citet{AhmadAlHussaini1982}
and \citet[Sec. 1.4.7]{chen2023statistical}. Our first result shows
that the same phenomenon persists on the full Dirichlet parameter
space $\Theta=\mathbb{R}_{>0}^{J}$. Indeed, if $e_{j}$ denotes the
$j$th standard basis vector of $\mathbb{R}^{J}$, then every Dirichlet
density satisfies the identity 
\[
f_{\alpha}(x)=\sum_{j=1}^{J}\frac{\alpha_{j}}{\alpha_{+}}f_{\alpha+e_{j}}(x).
\]
This expresses a single Dirichlet density as a mixture of $J$ distinct
Dirichlet densities and therefore yields non-identifiability of $\mathcal{F}$.

We then show that this obstruction disappears under a pair of restrictions
on the parameter space. First, for each fixed $A>0$, the class $\mathcal{F}(\Theta_{A})$
with 
\[
\Theta_{A}=\{\alpha\in\mathbb{R}_{>0}^{J}:\alpha_{+}=A\}
\]
is identifiable. Second, identifiability also holds on the full-dimensional
and open set 
\[
\Theta_{\mathrm{FD}}=\left\{ \alpha\in\mathbb{R}_{>0}^{J}:0<\alpha_{1},\dots,\alpha_{J-1}<1,\alpha_{J}>0\right\} .
\]
Finally, 
although the class $\mathcal{F}$ is not identifiable, we can show that the subclass
\[
\mathcal{F}_{J-1}=\left\{ m_{G}:G\text{ is a finite discrete mixing measure on }\mathbb{R}_{>0}^{J}\text{ with at most }J-1\text{ atoms}\right\} 
\]
is identifiable, in the sense that whenever $G$ and $G'$ are finite
mixing measures on $\mathbb{R}_{>0}^{J}$, each with at most $J-1$
atoms, and 
\[
m_{G}(x)=m_{G'}(x)\qquad\text{for almost every }x\in\Delta_{J-1}^{\circ},
\]
then $G=G'$ as measures. Taken together, these results delineate
the boundary between global non-identifiability and identifiable regimes
for finite Dirichlet mixtures. Section~\ref{sec:Implications} shows
that the non-identifiability conclusions also propagate directly to several related
constructions, yielding non-identifiability statements for unrestricted
finite generalized Dirichlet mixtures \citep{ConnorMosimann1969,wong2010gd},
Dirichlet-multinomial mixtures \citep{bouguila2008gdm,holmes2012dmm},
fixed-topic-matrix latent Dirichlet allocation kernels \citep{blei2003lda},
Beta-Liouville mixtures \citep{fan2013bl}, and inverted Beta-Liouville
mixtures \citep{hu2019ibl}.

The proofs use two complementary devices. On the fixed-total slice
$\Theta_{A}$, an additive log-ratio transform converts the Dirichlet
kernels into exponential functions, so identifiability reduces to
a classical linear-independence argument for distinct exponentials,
in the spirit of transform methods used in mixture identifiability;
see \citet[Sec.~3.1]{TitteringtonSmithMakov1985}. For box-type restrictions
and for the full-space result on $\mathcal{F}_{J-1}$, we instead
transport the kernels to the positive orthant and exploit the binomial
and multinomial theorems in local series expansions; see, e.g., \citet[Sec.~5.1]{GrahamKnuthPatashnik1994}.
The resulting local identities are then extended from a neighborhood
to the full connected domain by analytic continuation along line restrictions
(cf. \citealp[Cor.~1.2.6 and Prop.~2.2.8]{KrantzParks2002}). In Section~\ref{sec:Identifiability-of-FJ-1},
this is combined with coefficient extraction, congruence decomposition
modulo $\mathbb{Z}^{J}$, and an elementary sign counting argument
for polynomial identities on the simplex. The degree-elevation step
in that sign argument is closely related to the Bernstein--Bézier
representation and degree-raising formula on triangles; see, e.g.,
\citet[Thms.~2.4 and~2.39]{LaiSchumaker2007}. We do not know of sources
containing these ingredients in the precise forms used here. In Section~\ref{sec:Implications},
we indicate which of these mechanisms is responsible for each application.

The manuscript proceeds as follows. Section~\ref{sec:Non-identifiability}
establishes the shift identity and the resulting non-identifiability
of unrestricted finite Dirichlet mixtures. Section~\ref{sec:Identifiable-restrictions}
proves identifiability on the fixed-total class $\Theta_{A}$ and
on box-type restrictions such as $\Theta_{\mathrm{FD}}$, and records
the positive-orthant transport that also transfers these conclusions
to inverted Dirichlet mixtures. Section~\ref{sec:Identifiability-of-FJ-1}
shows that every nontrivial linear relation among Dirichlet kernels
must involve at least $J$ coefficients sharing a common sign, which
yields identifiability of mixtures with fewer than $J$ atoms on the
full parameter space. Section~\ref{sec:Implications} collects direct
consequences for related mixture classes.

\section{Non-identifiability of $\mathcal{F}$}

\label{sec:Non-identifiability}

In this section we retain the notation introduced in Section~\ref{sec:Introduction}.
For $a>0$, we write 
\[
\Gamma(a)=\int_{0}^{\infty}t^{a-1}\mathrm{e}^{-t}\,dt
\]
for the gamma function. The basic mechanism behind the non-identifiability
of $\mathcal{F}$ is the following shift identity. 
\begin{thm}
\label{thm:shift} For every $\alpha\in(0,\infty)^{J}$ and every
$x\in\Delta_{J-1}^{\circ}$, 
\[
f_{\alpha}(x)=\sum_{j=1}^{J}\frac{\alpha_{j}}{\alpha_{+}}f_{\alpha+e_{j}}(x).
\]
\end{thm}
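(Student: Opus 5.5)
The plan is to expand each shifted density $f_{\alpha+e_{j}}$ in terms of $f_{\alpha}$ and then use the simplex constraint $\sum_{j}x_{j}=1$. Fix $\alpha\in(0,\infty)^{J}$ and $j\in\{1,\dots,J\}$. The parameter $\alpha+e_{j}$ has total $\alpha_{+}+1$, and it differs from $\alpha$ only in the $j$th coordinate. We apply the functional equation $\Gamma(a+1)=a\Gamma(a)$, which follows from the integral definition of $\Gamma$ by integration by parts, to both $\Gamma(\alpha_{+}+1)$ and $\Gamma(\alpha_{j}+1)$. This gives the normalizing-constant ratio
\[
\frac{\Gamma(\alpha_{+}+1)}{\Gamma(\alpha_{j}+1)\prod_{i\neq j}\Gamma(\alpha_{i})}=\frac{\alpha_{+}}{\alpha_{j}}\cdot\frac{\Gamma(\alpha_{+})}{\prod_{i=1}^{J}\Gamma(\alpha_{i})}.
\]
The monomial part of $f_{\alpha+e_{j}}$ picks up exactly one extra factor $x_{j}$. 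Hence, for every $x\in\Delta_{J-1}^{\circ}$,
\[
f_{\alpha+e_{j}}(x)=\frac{\alpha_{+}}{\alpha_{j}}\,x_{j}\,f_{\alpha}(x).
\]

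Next we multiply this identity by the weight $\alpha_{j}/\alpha_{+}$, which cancels the prefactor and leaves $x_{j}f_{\alpha}(x)$. Summing over $j=1,\dots,J$ then gives $f_{\alpha}(x)\sum_{j=1}^{J}x_{j}$. Since $x\in\Delta_{J-1}^{\circ}$, we have $\sum_{j}x_{j}=1$ with $x_{J}=1-\sum_{j<J}x_{j}$ under the paper's coordinate convention, and this yields the claimed identity pointwise on the simplex interior.

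No step is a genuine obstacle; the argument is a direct computation. The only point needing care is to apply the recursion to both the numerator gamma $\Gamma(\alpha_{+}+1)$ and the single changed denominator gamma $\Gamma(\alpha_{j}+1)$, leaving the other denominator factors unchanged. I would also note two facts for the subsequent non-identifiability corollary. First, the weights $\alpha_{j}/\alpha_{+}$ are strictly positive and sum to one. Second, the $J$ atoms $\alpha+e_{j}$ are distinct from each other and from $\alpha$. Together these mean the identity expresses $f_{\alpha}$ as a genuine $J$-component mixture.
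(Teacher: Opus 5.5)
Your proposal is correct and follows essentially the same route as the paper. Both use $\Gamma(t+1)=t\,\Gamma(t)$ to obtain $f_{\alpha+e_{j}}(x)=\frac{\alpha_{+}}{\alpha_{j}}\,x_{j}\,f_{\alpha}(x)$, then weight each term by $\alpha_{j}/\alpha_{+}$, sum over $j$, and invoke $\sum_{j=1}^{J}x_{j}=1$ on the simplex.
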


\begin{proof}
Write $c(\alpha)=\Gamma(\alpha_{+})/\prod_{m=1}^{J}\Gamma(\alpha_{m})$
so that $f_{\alpha}(x)=c(\alpha)\prod_{m=1}^{J}x_{m}^{\alpha_{m}-1}$.

Fix $j$. Then 
\[
f_{\alpha+e_{j}}(x)=c(\alpha+e_{j})\,x_{j}^{\alpha_{j}}\prod_{m\ne j}x_{m}^{\alpha_{m}-1}=c(\alpha+e_{j})\,x_{j}\prod_{m=1}^{J}x_{m}^{\alpha_{m}-1}.
\]
Next compute $c(\alpha+e_{j})$ using $\Gamma(t+1)=t\,\Gamma(t)$
to obtain 
\[
c(\alpha+e_{j})=\frac{\Gamma(\alpha_{+}+1)}{\Gamma(\alpha_{j}+1)\prod_{m\ne j}\Gamma(\alpha_{m})}=\frac{\alpha_{+}\Gamma(\alpha_{+})}{\alpha_{j}\Gamma(\alpha_{j})\prod_{m\ne j}\Gamma(\alpha_{m})}=\frac{\alpha_{+}}{\alpha_{j}}\,c(\alpha).
\]
Therefore 
\[
\frac{\alpha_{j}}{\alpha_{+}}f_{\alpha+e_{j}}(x)=c(\alpha)\,x_{j}\prod_{m=1}^{J}x_{m}^{\alpha_{m}-1}.
\]
Summing over $j$ and using the fact that $\sum_{j=1}^{J}x_{j}=1$
on the simplex yields 
\[
\sum_{j=1}^{J}\frac{\alpha_{j}}{\alpha_{+}}f_{\alpha+e_{j}}(x)=c(\alpha)\left(\sum_{j=1}^{J}x_{j}\right)\prod_{m=1}^{J}x_{m}^{\alpha_{m}-1}=f_{\alpha}(x).
\]
\end{proof}
For $J=2$, Theorem~\ref{thm:shift} is proved in \citet{Simone2022}.
The following corollary is then immediate. 
\begin{cor}
\label{cor:nonid} The class of finite Dirichlet mixtures $\mathcal{F}$
is not identifiable. In particular, for any $\alpha\in\mathbb{R}_{>0}^{J}$
the one-atom measure $G_{0}=\delta_{\alpha}$ and the $J$-atom measure
\[
G_{1}=\sum_{j=1}^{J}\frac{\alpha_{j}}{\alpha_{+}}\,\delta_{\alpha+e_{j}}
\]
are distinct but $m_{G_{0}}=m_{G_{1}}$. 
\end{cor}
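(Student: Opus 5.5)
The plan is to read the corollary off Theorem~\ref{thm:shift} and check the details that the mixture definition requires. Fix $\alpha\in\mathbb{R}_{>0}^{J}$.

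First, $G_{1}$ must be a legitimate finite discrete mixing measure on $\mathbb{R}_{>0}^{J}$. Its weights $\alpha_{j}/\alpha_{+}$ are strictly positive because every $\alpha_{j}>0$, and they sum to $\alpha_{+}/\alpha_{+}=1$. Its atoms $\alpha+e_{j}$ lie in $\mathbb{R}_{>0}^{J}$, since adding $e_{j}$ only increases one coordinate. They are pairwise distinct, because $(\alpha+e_{j})-(\alpha+e_{k})=e_{j}-e_{k}\neq0$ for $j\neq k$.

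Second, the two mixture densities coincide. By definition,
\[
m_{G_{1}}(x)=\sum_{j=1}^{J}\frac{\alpha_{j}}{\alpha_{+}}f_{\alpha+e_{j}}(x),
\]
and Theorem~\ref{thm:shift} says this equals $f_{\alpha}(x)=m_{G_{0}}(x)$ for every $x\in\Delta_{J-1}^{\circ}$. The equality therefore holds everywhere, and in particular almost everywhere.

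Third, $G_{0}\neq G_{1}$ as measures. The simplest evidence is the point $\alpha$ itself: $G_{0}(\{\alpha\})=1$, whereas $G_{1}(\{\alpha\})=0$ because $\alpha\neq\alpha+e_{j}$ for every $j$. Alternatively, $G_{0}$ has one atom and $G_{1}$ has $J\ge2$ atoms.

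Together these give a pair of distinct finite mixing measures supported on $\mathbb{R}_{>0}^{J}$ with the same mixture density, which is exactly the failure of the identifiability definition for $\mathcal{F}$. None of the steps is a real obstacle. The only points needing care are that $G_{1}$ meets the requirements of distinct atoms and positive weights summing to one, and that the measures are genuinely different rather than relabelings of each other.
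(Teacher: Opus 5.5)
Your proposal is correct and takes the same route as the paper: apply Theorem~\ref{thm:shift} to get $m_{G_{0}}=m_{G_{1}}$ everywhere on $\Delta_{J-1}^{\circ}$, then observe $G_{0}\neq G_{1}$. Your checks that $G_{1}$ is a valid mixing measure (positive weights summing to one, distinct atoms in $\mathbb{R}_{>0}^{J}$) and that $G_{0}(\{\alpha\})=1\neq0=G_{1}(\{\alpha\})$ spell out what the paper's one-line proof leaves implicit.
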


\begin{proof}
By Theorem~\ref{thm:shift}, $m_{G_{0}}(x)=f_{\alpha}(x)=m_{G_{1}}(x)$
for all $x\in\Delta_{J-1}^{\circ}$, yet $G_{0}\neq G_{1}$. 
\end{proof}

\section{Identifiable restrictions of $\mathcal{F}$}

\label{sec:Identifiable-restrictions}

We begin with a general sufficient condition for identifiability.
This criterion will also be used in later sections and is a multivariate
analogue of Theorem~3.1.1 of \citet{TitteringtonSmithMakov1985}. 
\begin{lem}
\label{lem:linind_ident} Let $\Theta\subseteq\mathbb{R}_{>0}^{J}$.
Suppose the family $\{f_{\alpha}:\alpha\in\Theta\}$ is linearly independent
in the sense that for any distinct $\alpha^{(1)},\dots,\alpha^{(K)}\in\Theta$,
$K\in\mathbb{N}$, 
\[
\sum_{k=1}^{K}c_{k}f_{\alpha^{(k)}}(x)=0\ \text{a.e. on }\Delta_{J-1}^{\circ}\quad\Longrightarrow\quad c_{1}=\cdots=c_{K}=0.
\]
Then $\mathcal{F}\left(\Theta\right)$ is identifiable. 
\end{lem}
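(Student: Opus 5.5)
The plan is the standard Teicher-type argument: write the difference of the two mixtures as one finite linear combination of kernels over the union of the two supports, then apply the linear independence hypothesis.

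\emph{Setting up the relation.} Suppose $G=\sum_{k=1}^{K}\pi_{k}\delta_{\alpha^{(k)}}$ and $G'=\sum_{l=1}^{L}\pi'_{l}\delta_{\beta^{(l)}}$ have atoms in $\Theta$, and $m_{G}=m_{G'}$ almost everywhere on $\Delta_{J-1}^{\circ}$. Let $\{\gamma^{(1)},\dots,\gamma^{(M)}\}$ be the union of the two atom sets, listed without repetition; it is a finite set of distinct points of $\Theta$. For each $i$, set
\[
c_{i}=G(\{\gamma^{(i)}\})-G'(\{\gamma^{(i)}\}),
\]
which equals $\pi_{k}-\pi'_{l}$ if $\gamma^{(i)}=\alpha^{(k)}=\beta^{(l)}$, equals $\pi_{k}$ if $\gamma^{(i)}$ is an atom of $G$ only, and equals $-\pi'_{l}$ if it is an atom of $G'$ only.

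\emph{Applying the hypothesis.} With these coefficients,
\[
\sum_{i=1}^{M}c_{i}f_{\gamma^{(i)}}(x)=m_{G}(x)-m_{G'}(x)=0
\]
for almost every $x\in\Delta_{J-1}^{\circ}$. The hypothesis of the lemma therefore forces $c_{1}=\cdots=c_{M}=0$.

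\emph{Concluding $G=G'$.} Vanishing of the $c_{i}$ means $G$ and $G'$ assign the same mass to every point $\gamma^{(i)}$. Both measures are concentrated on $\{\gamma^{(1)},\dots,\gamma^{(M)}\}$, so $G(B)=\sum_{i:\gamma^{(i)}\in B}G(\{\gamma^{(i)}\})$ for every Borel set $B$, and the same holds for $G'$. Hence $G(B)=G'(B)$ for all $B$, that is, $G=G'$.

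There is no substantive obstacle here. The only point requiring care is merging the two atom lists into a single list of distinct points, because the hypothesis applies only to distinct parameters. It is also worth recording, for later use, that the weight constraints $\pi_{k}>0$ and $\sum_{k}\pi_{k}=\sum_{l}\pi'_{l}=1$ are not needed. The argument shows that any two finite signed measures on $\Theta$ with equal induced densities coincide.
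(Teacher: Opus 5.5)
Your proposal is correct and follows the paper's proof essentially step for step: you merge the two atom sets into one list of distinct parameters, set $c_i=G(\{\gamma^{(i)}\})-G'(\{\gamma^{(i)}\})$, apply the linear independence hypothesis, and conclude $G=G'$ from equal singleton masses. Your closing remark is also right, provided ``finite signed measures'' is read as \emph{finitely supported} signed measures, which is what the argument actually covers.
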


\begin{proof}
Let 
\[
G=\sum_{k=1}^{K}\pi_{k}\delta_{\alpha^{(k)}}\quad\text{and}\quad G'=\sum_{\ell=1}^{L}\rho_{\ell}\delta_{\beta^{(\ell)}}
\]
be two finite mixing measures supported on $\Theta$, where $\pi_{k}>0$,
$\rho_{\ell}>0$, $\sum_{k=1}^{K}\pi_{k}=1$, $\sum_{\ell=1}^{L}\rho_{\ell}=1$,
and the atoms $\alpha^{(1)},\dots,\alpha^{(K)}$ are pairwise distinct,
as are $\beta^{(1)},\dots,\beta^{(L)}$.

Assume that the induced mixture densities coincide almost everywhere:
\[
m_{G}(x)=m_{G'}(x)\qquad\text{for almost every }x\in\Delta_{J-1}^{\circ}.
\]
By definition of $m_{G}$ and $m_{G'}$, this means 
\[
\sum_{k=1}^{K}\pi_{k}f_{\alpha^{(k)}}(x)=\sum_{\ell=1}^{L}\rho_{\ell}f_{\beta^{(\ell)}}(x)\qquad\text{for almost every }x\in\Delta_{J-1}^{\circ}.
\]
Move all terms to one side: 
\[
\sum_{k=1}^{K}\pi_{k}f_{\alpha^{(k)}}(x)-\sum_{\ell=1}^{L}\rho_{\ell}f_{\beta^{(\ell)}}(x)=0\qquad\text{for almost every }x\in\Delta_{J-1}^{\circ}.
\]

Now form the finite set of distinct parameters appearing in either
mixture, 
\[
\mathbb{G}=\{\alpha^{(1)},\dots,\alpha^{(K)}\}\cup\{\beta^{(1)},\dots,\beta^{(L)}\}\subseteq\Theta.
\]
Enumerate $\mathbb{G}$ as $\mathbb{G}=\{\gamma^{(1)},\dots,\gamma^{(M)}\}$
with all $\gamma^{(i)}$ distinct. For each $i\in\{1,\dots,M\}$ define
the coefficient 
\[
c_{i}=G(\{\gamma^{(i)}\})-G'(\{\gamma^{(i)}\}),
\]
that is, 
\[
c_{i}=\begin{cases}
\pi_{k} & \text{if }\gamma^{(i)}=\alpha^{(k)}\text{ and }\gamma^{(i)}\notin\{\beta^{(1)},\dots,\beta^{(L)}\},\\
-\rho_{\ell} & \text{if }\gamma^{(i)}=\beta^{(\ell)}\text{ and }\gamma^{(i)}\notin\{\alpha^{(1)},\dots,\alpha^{(K)}\},\\
\pi_{k}-\rho_{\ell} & \text{if }\gamma^{(i)}=\alpha^{(k)}=\beta^{(\ell)}.
\end{cases}
\]
With this notation, the previous identity becomes a linear combination
over distinct kernels: 
\[
\sum_{i=1}^{M}c_{i}f_{\gamma^{(i)}}(x)=0\qquad\text{for almost every }x\in\Delta_{J-1}^{\circ}.
\]

By the assumed linear independence of the family $\{f_{\alpha}:\alpha\in\Theta\}$,
applied to the distinct parameters $\gamma^{(1)},\dots,\gamma^{(M)}\in\Theta$,
we must have 
\[
c_{1}=c_{2}=\cdots=c_{M}=0.
\]
Thus $G(\{\gamma^{(i)}\})=G'(\{\gamma^{(i)}\})$ for every $i=1,\dots,M$.
Since both $G$ and $G'$ are finite discrete measures supported on
$\mathbb{G}$, agreement on all singleton masses implies $G=G'$ as
measures. 
\end{proof}

\subsection{The class $\mathcal{F}\left(\Theta_{A}\right)$}

We first provide a linear independence result for finite sums of multivariate
exponential functions. 
\begin{lem}
\label{lem:multivar_exp} Let $u_{1},\dots,u_{K}\in\mathbb{R}^{d}$
be distinct vectors and $c_{1},\dots,c_{K}\in\mathbb{R}$, $K\in\mathbb{N}$.
If 
\[
\sum_{k=1}^{K}c_{k}\mathrm{e}^{\left\langle u_{k},t\right\rangle }=0\qquad\text{for all }t\in\mathbb{R}^{d},
\]
then $c_{1}=\cdots=c_{K}=0$. 
\end{lem}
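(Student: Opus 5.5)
The plan is to reduce to the one-dimensional case by restricting to a suitable line, and then to prove the univariate statement by induction on $K$ using differentiation.

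\emph{Step 1: choose a separating direction.} Since $u_1,\dots,u_K$ are distinct, each difference $u_k-u_l$ with $k\neq l$ is nonzero. The set of $v\in\mathbb{R}^d$ with $\langle u_k-u_l,v\rangle=0$ is therefore a hyperplane. A finite union of hyperplanes cannot cover $\mathbb{R}^d$, for instance because each has Lebesgue measure zero. So we may pick $v$ such that the scalars $\lambda_k=\langle u_k,v\rangle$ are pairwise distinct.

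\emph{Step 2: restrict to the line $t=sv$.} Substituting $t=sv$ gives
\[
\sum_{k=1}^{K}c_k\mathrm{e}^{\lambda_k s}=0\qquad\text{for all }s\in\mathbb{R},
\]
with $\lambda_1,\dots,\lambda_K$ distinct. It now suffices to show that distinct real exponentials are linearly independent on $\mathbb{R}$.

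\emph{Step 3: the univariate case.} We argue by induction on $K$. For $K=1$, $c_1\mathrm{e}^{\lambda_1 s}=0$ forces $c_1=0$. For the inductive step, multiply the identity by $\mathrm{e}^{-\lambda_K s}$ and differentiate in $s$. This gives
\[
\sum_{k=1}^{K-1}c_k(\lambda_k-\lambda_K)\mathrm{e}^{(\lambda_k-\lambda_K)s}=0 .
\]
The exponents $\lambda_k-\lambda_K$, $k<K$, are still distinct. The induction hypothesis then gives $c_k(\lambda_k-\lambda_K)=0$, hence $c_k=0$ for $k<K$. Substituting back gives $c_K\mathrm{e}^{\lambda_K s}=0$, so $c_K=0$.

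As an alternative to Step 3, one can order $\lambda_1>\dots>\lambda_K$, divide by $\mathrm{e}^{\lambda_1 s}$, and let $s\to\infty$ to get $c_1=0$, then iterate.

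The only step requiring any thought is Step 1, the existence of a direction that separates all the $u_k$; it is settled by the measure-zero (or finite-union-of-proper-subspaces) observation above. The remaining steps are routine.
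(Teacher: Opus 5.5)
Your proof is correct and follows the paper's argument: you pick a direction $v$ outside the finitely many hyperplanes $\{v:\langle u_k-u_l,v\rangle=0\}$, restrict to the line $t=sv$, and then settle the univariate case. Your differentiation-based induction in Step 3 is a harmless variant of the paper's step (divide by the dominant exponential and let $s\to\infty$), which is exactly the alternative you list.
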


\begin{proof}
Choose $v\in\mathbb{R}^{d}$ such that the scalars $\lambda_{k}=\left\langle u_{k},v\right\rangle $
are all distinct. Such a $v$ exists because for each $i\ne j$ the
set $\{v:\left\langle u_{i}-u_{j},v\right\rangle =0\}$ is a hyperplane,
and a finite union of hyperplanes cannot cover $\mathbb{R}^{d}$.
Restricting to the line $t=sv$ gives 
\[
\sum_{k=1}^{K}c_{k}\mathrm{e}^{\lambda_{k}s}=0\qquad\forall s\in\mathbb{R}.
\]
Order the $\lambda_{k}$ so that $\lambda_{1}<\cdots<\lambda_{K}$
and divide by $\mathrm{e}^{\lambda_{K}s}$: 
\[
c_{K}+\sum_{k=1}^{K-1}c_{k}\mathrm{e}^{(\lambda_{k}-\lambda_{K})s}=0.
\]
Letting $s\to\infty$ yields $c_{K}=0$ since each $\lambda_{k}-\lambda_{K}<0$
for $k<K$. Iterate to conclude all $c_{k}=0$. 
\end{proof}
Define the additive log-ratio map $\psi:\mathbb{R}^{J-1}\to\Delta_{J-1}^{\circ}$,
where $\psi\left(t\right)=\left(x_{1}\left(t\right),\dots,x_{J}\left(t\right)\right)$,
\[
S(t)=1+\sum_{r=1}^{J-1}\mathrm{e}^{t_{r}},\qquad x_{j}(t)=\frac{\mathrm{e}^{t_{j}}}{S(t)}\ (1\le j\le J-1),\qquad x_{J}(t)=\frac{1}{S(t)}.
\]
This map is a bijection with inverse 
\[
\psi^{-1}(x)=\left(\log\frac{x_{1}}{x_{J}},\dots,\log\frac{x_{J-1}}{x_{J}}\right),\qquad x\in\Delta_{J-1}^{\circ}.
\]
We also write $\mathbf{1}\{\cdot\}$ for the indicator function. 
\begin{lem}
\label{lem:alr_jac} The Jacobian determinant of $t\mapsto(x_{1}(t),\dots,x_{J-1}(t))$
satisfies 
\[
\left|\det\left(\frac{\partial(x_{1},\dots,x_{J-1})}{\partial(t_{1},\dots,t_{J-1})}\right)\right|=\prod_{j=1}^{J}x_{j}(t).
\]
\end{lem}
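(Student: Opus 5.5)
We compute the Jacobian matrix explicitly and recognize it as a rank-one perturbation of a diagonal matrix. Then the matrix determinant lemma gives the result directly.

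Writing $x_j=x_j(t)$ and $S=S(t)$, for $1\le j,r\le J-1$ we differentiate $x_j=\mathrm{e}^{t_j}/S$ with the quotient rule. Since $\partial S/\partial t_r=\mathrm{e}^{t_r}$, this gives
\[
\frac{\partial x_{j}}{\partial t_{r}}=\frac{\mathrm{e}^{t_{j}}\mathbf{1}\{j=r\}}{S}-\frac{\mathrm{e}^{t_{j}}\mathrm{e}^{t_{r}}}{S^{2}}=x_{j}\mathbf{1}\{j=r\}-x_{j}x_{r}.
\]
Hence the Jacobian matrix is $D-yy^{\top}$, where $y=(x_1,\dots,x_{J-1})^{\top}$ and $D=\mathrm{diag}(x_1,\dots,x_{J-1})$.

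Because every $x_j>0$, the matrix $D$ is invertible. The matrix determinant lemma $\det(D-yy^{\top})=\det(D)\,(1-y^{\top}D^{-1}y)$ then applies. Here $y^{\top}D^{-1}y=\sum_{j=1}^{J-1}x_j^2/x_j=\sum_{j=1}^{J-1}x_j=1-x_J$, so
\[
\det(D-yy^{\top})=\Big(\prod_{j=1}^{J-1}x_{j}\Big)\,x_{J}=\prod_{j=1}^{J}x_{j}(t).
\]
This determinant is positive, so taking the absolute value changes nothing.

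The only step needing care is justifying the determinant lemma. If we prefer to avoid citing it, we can argue directly: factor $D-yy^{\top}=D(I-D^{-1}yy^{\top})$, and use that $I-uv^{\top}$ has eigenvalue $1-v^{\top}u$ on $\mathrm{span}\{u\}$ and eigenvalue $1$ on $v^{\perp}$. This gives $\det(I-uv^{\top})=1-v^{\top}u$, which closes the argument. No real obstacle is expected beyond this bookkeeping. The case $J=2$ is a useful sanity check: there $dx_1/dt_1=x_1-x_1^2=x_1x_2$, as required.
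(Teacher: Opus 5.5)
Your proposal is correct and follows the paper's proof: you write the Jacobian as the rank-one perturbation $D-uu^{\top}$ of $D=\mathrm{diag}(x_1,\dots,x_{J-1})$ and apply the matrix determinant lemma with $u^{\top}D^{-1}u=\sum_{j<J}x_j=1-x_J$. Your two additions are sound and simply make the same argument self-contained: the determinant is positive, so the absolute value is harmless, and the eigenvalue derivation of $\det(I-uv^{\top})=1-v^{\top}u$ is valid here because $v^{\top}u=1-x_J\neq 0$.
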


\begin{proof}
For $1\le i,j\le J-1$ one computes 
\[
\frac{\partial x_{i}}{\partial t_{j}}=x_{i}(\mathbf{1}\{i=j\}-x_{j}).
\]
Let $D=\mathrm{diag}(x_{1},\dots,x_{J-1})$ and $u=(x_{1},\dots,x_{J-1})^{\top}$.
Then the Jacobian matrix is $M=D-uu^{\top}$. By the matrix determinant
lemma, $\det(D-uu^{\top})=\det(D)\left(1-u^{\top}D^{-1}u\right)$.
Since $u^{\top}D^{-1}u=\sum_{i=1}^{J-1}x_{i}$ we obtain 
\[
\det(M)=\left(\prod_{i=1}^{J-1}x_{i}\right)\left(1-\sum_{i=1}^{J-1}x_{i}\right)=\left(\prod_{i=1}^{J-1}x_{i}\right)x_{J}=\prod_{j=1}^{J}x_{j}.
\]
\end{proof}
We write $X\sim\mathrm{Dir}(\alpha)$ when $X$ has density~\eqref{eq:Dirichlet_dens},
and for $\alpha=(\alpha_{1},\dots,\alpha_{J})$ we abbreviate $\alpha_{1:J-1}=(\alpha_{1},\dots,\alpha_{J-1})$.
The next theorem shows that fixing the total mass parameter removes
the non-identifiability exhibited in Theorem~\ref{thm:shift}. 
\begin{thm}
\label{thm:fixedA} For every $A>0$, the finite Dirichlet mixture
class $\mathcal{F}\left(\Theta_{A}\right)$ is identifiable. 
\end{thm}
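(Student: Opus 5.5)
We reduce the claim to Lemma~\ref{lem:linind_ident}: it suffices to show that for distinct $\alpha^{(1)},\dots,\alpha^{(K)}\in\Theta_{A}$ and real $c_{1},\dots,c_{K}$, the relation $\sum_{k}c_{k}f_{\alpha^{(k)}}=0$ a.e. on $\Delta_{J-1}^{\circ}$ forces all $c_{k}=0$. Each $f_{\alpha}$ is continuous on the open set $\Delta_{J-1}^{\circ}$ (in the coordinates $x_{1},\dots,x_{J-1}$), so an a.e. vanishing finite combination vanishes identically. We may therefore assume the relation holds for every $x\in\Delta_{J-1}^{\circ}$.

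Next we pull the relation back through the additive log-ratio bijection $\psi$ by substituting $x=\psi(t)$, $t\in\mathbb{R}^{J-1}$. For $\alpha\in\Theta_{A}$ we have
\[
\prod_{j=1}^{J}x_{j}(t)^{\alpha_{j}-1}=\frac{\exp\left(\sum_{j=1}^{J-1}(\alpha_{j}-1)t_{j}\right)}{S(t)^{\alpha_{+}-J}}=S(t)^{J-A}\exp\left(\langle\alpha_{1:J-1},t\rangle\right)\mathrm{e}^{-\sum_{j<J}t_{j}}.
\]
The key point is that the factor $S(t)^{J-A}\mathrm{e}^{-\sum_{j}t_{j}}$ is strictly positive and is common to every kernel, because $\alpha_{+}=A$ is fixed on the slice. (Lemma~\ref{lem:alr_jac} is not needed for this pointwise argument; it would only matter if we phrased things through densities of $\psi^{-1}(X)$, in which case the Jacobian simply absorbs the $-1$'s in the exponents.) Dividing by the common factor turns the relation into
\[
\sum_{k=1}^{K}c_{k}\,C(\alpha^{(k)})\,\mathrm{e}^{\langle\alpha^{(k)}_{1:J-1},t\rangle}=0\qquad\text{for all }t\in\mathbb{R}^{J-1},
\]
where $C(\alpha)=\Gamma(A)/\prod_{j}\Gamma(\alpha_{j})>0$ is the normalizing constant.

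The remaining step is to check that the exponent vectors $u_{k}=\alpha^{(k)}_{1:J-1}$ are distinct. This is the only place where the restriction to $\Theta_{A}$ is essential. If $u_{k}=u_{l}$, then $\alpha^{(k)}_{J}=A-\sum_{j<J}\alpha^{(k)}_{j}=\alpha^{(l)}_{J}$, so $\alpha^{(k)}=\alpha^{(l)}$; hence distinct atoms give distinct $u_{k}$. Lemma~\ref{lem:multivar_exp} with $d=J-1$ then gives $c_{k}C(\alpha^{(k)})=0$, and so $c_{k}=0$ for every $k$. Lemma~\ref{lem:linind_ident} yields identifiability of $\mathcal{F}(\Theta_{A})$.

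There is no serious obstacle in this argument. The points to state carefully are the passage from a.e. equality to everywhere equality via continuity and the fact that $\psi$ is a bijection onto $\Delta_{J-1}^{\circ}$, so that vanishing on the simplex is equivalent to vanishing for all $t\in\mathbb{R}^{J-1}$.
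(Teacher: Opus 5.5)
Your proof is correct and takes essentially the paper's route: the additive log-ratio substitution turns the kernels on $\Theta_A$ into exponentials $\mathrm{e}^{\langle\alpha_{1:J-1},t\rangle}$ times a common positive factor, the fixed total makes the exponent vectors distinct, and Lemma~\ref{lem:multivar_exp} finishes. The differences are cosmetic. You substitute pointwise, so Lemma~\ref{lem:alr_jac} is not needed, and you prove linear independence and then invoke Lemma~\ref{lem:linind_ident}; the paper instead compares the two mixtures directly and obtains linear independence afterwards as Corollary~\ref{cor:fixedA_linind}.
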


\begin{proof}
Let $\alpha\in\Theta_{A}$. Consider the density of $T=\psi^{-1}(X)$
when $X\sim\mathrm{Dir}(\alpha)$. By change of variables and Lemma~\ref{lem:alr_jac},
\begin{align*}
g_{\alpha}(t) & =f_{\alpha}(x(t))\,\left|\det\left(\frac{\partial(x_{1},\dots,x_{J-1})}{\partial(t_{1},\dots,t_{J-1})}\right)\right|\\
 & =\frac{\Gamma(A)}{\prod_{j=1}^{J}\Gamma(\alpha_{j})}\prod_{j=1}^{J}x_{j}(t)^{\alpha_{j}-1}\prod_{j=1}^{J}x_{j}(t)\\
 & =\frac{\Gamma(A)}{\prod_{j=1}^{J}\Gamma(\alpha_{j})}\prod_{j=1}^{J}x_{j}(t)^{\alpha_{j}}.
\end{align*}
Using $x_{j}(t)=\mathrm{e}^{t_{j}}/S(t)$ for $j\le J-1$ and $x_{J}(t)=1/S(t)$
gives 
\[
\prod_{j=1}^{J}x_{j}(t)^{\alpha_{j}}=\exp\left(\sum_{j=1}^{J-1}\alpha_{j}t_{j}\right)S(t)^{-A}=\mathrm{e}^{\left\langle \alpha_{1:J-1},t\right\rangle }S(t)^{-A},
\]
so 
\begin{equation}
g_{\alpha}(t)=\frac{\Gamma(A)}{\prod_{j=1}^{J}\Gamma(\alpha_{j})}\frac{\mathrm{e}^{\left\langle \alpha_{1:J-1},t\right\rangle }}{S(t)^{A}}.\label{eq:alr_density}
\end{equation}

Now suppose two finite mixtures supported on $\Theta_{A}$ coincide:
\[
\sum_{k=1}^{K}\pi_{k}f_{\alpha^{(k)}}(x)=\sum_{\ell=1}^{L}\rho_{\ell}f_{\beta^{(\ell)}}(x)\qquad\text{a.e. on }\Delta_{J-1}^{\circ}.
\]
Let 
\[
H(x)=\sum_{k=1}^{K}\pi_{k}f_{\alpha^{(k)}}(x)-\sum_{\ell=1}^{L}\rho_{\ell}f_{\beta^{(\ell)}}(x),\qquad x\in\Delta_{J-1}^{\circ}.
\]
Each $f_{\gamma}$ is continuous on $\Delta_{J-1}^{\circ}$, hence
$H$ is continuous on $\Delta_{J-1}^{\circ}$. Since $H(x)=0$ for
almost every $x\in\Delta_{J-1}^{\circ}$, we have $H(x)=0$ for all
$x\in\Delta_{J-1}^{\circ}$. Indeed, if $H(x_{0})\neq0$ for some
$x_{0}\in\Delta_{J-1}^{\circ}$, then continuity yields a neighbourhood
$V\subset\Delta_{J-1}^{\circ}$ of $x_{0}$ such that $|H(x)|\ge|H(x_{0})|/2$
for all $x\in V$. This set $V$ has positive Lebesgue measure, contradicting
$H=0$ a.e. Transforming by $x=\psi(t)$, using \eqref{eq:alr_density},
and canceling the common factor $\Gamma(A)$ yields for all $t\in\mathbb{R}^{J-1}$,
\[
\sum_{k=1}^{K}\pi_{k}\frac{1}{\prod_{j=1}^{J}\Gamma(\alpha_{j}^{(k)})}\mathrm{e}^{\left\langle \alpha_{1:J-1}^{(k)},t\right\rangle }=\sum_{\ell=1}^{L}\rho_{\ell}\frac{1}{\prod_{j=1}^{J}\Gamma(\beta_{j}^{(\ell)})}\mathrm{e}^{\left\langle \beta_{1:J-1}^{(\ell)},t\right\rangle }.
\]
Bring all terms to one side and merge equal exponent vectors. We obtain
a finite identity of the form 
\[
\sum_{i=1}^{N}c_{i}\mathrm{e}^{\left\langle u_{i},t\right\rangle }=0\qquad\forall t\in\mathbb{R}^{J-1},
\]
for $N\le K+L$, with distinct $u_{i}\in\mathbb{R}^{J-1}$. By Lemma~\ref{lem:multivar_exp},
all $c_{i}=0$. In particular, an exponent vector can appear on one
side only if it also appears on the other side with the same coefficient.
Because the parameters in each mixture are distinct and the last coordinate
is determined by the fixed-total constraint $\alpha_{+}=A$, the exponent
vectors $\alpha_{1:J-1}^{(k)}$ are pairwise distinct, and likewise
the vectors $\beta_{1:J-1}^{(\ell)}$ are pairwise distinct. Therefore
the two supports in $\mathbb{R}^{J-1}$ coincide: 
\[
\{\alpha_{1:J-1}^{(k)}\}_{k=1}^{K}=\{\beta_{1:J-1}^{(\ell)}\}_{\ell=1}^{L},
\]
so in particular $K=L$ and after relabeling we may assume $\alpha_{1:J-1}^{(k)}=\beta_{1:J-1}^{(k)}$,
for all $k$. Since $\alpha_{+}^{(k)}=\beta_{+}^{(k)}=A$, the last
coordinates are forced: 
\[
\alpha_{J}^{(k)}=A-\sum_{j=1}^{J-1}\alpha_{j}^{(k)}=A-\sum_{j=1}^{J-1}\beta_{j}^{(k)}=\beta_{J}^{(k)},
\]
so $\alpha^{(k)}=\beta^{(k)}$. Finally, the corresponding coefficients
must match, giving $\pi_{k}=\rho_{k}$. Thus the mixing measures are
equal. 
\end{proof}
The following corollary will prove useful in the sequel. 
\begin{cor}
\label{cor:fixedA_linind} For every $A>0$, the family $\{f_{\alpha}:\alpha\in\Theta_{A}\}$
is linearly independent. 
\end{cor}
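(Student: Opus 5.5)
The plan is to rerun the computation in the proof of Theorem~\ref{thm:fixedA}, this time with arbitrary real coefficients instead of mixing weights. Fix $A>0$, distinct $\alpha^{(1)},\dots,\alpha^{(K)}\in\Theta_{A}$, and real $c_{1},\dots,c_{K}$ such that
\[
H(x)=\sum_{k=1}^{K}c_{k}f_{\alpha^{(k)}}(x)=0\quad\text{for a.e. }x\in\Delta_{J-1}^{\circ}.
\]
The first step is to show $H\equiv0$ everywhere on $\Delta_{J-1}^{\circ}$. This uses the same continuity argument as before: each $f_{\alpha}$ is continuous on the open set $\Delta_{J-1}^{\circ}$. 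If $H(x_{0})\neq0$ at some point, then $H$ is nonzero on a neighbourhood of $x_{0}$, which has positive measure, and this is a contradiction.

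Next I compose with the additive log-ratio map $\psi$ and multiply by the Jacobian factor $\prod_{j}x_{j}(t)$ from Lemma~\ref{lem:alr_jac}. By \eqref{eq:alr_density}, for all $t\in\mathbb{R}^{J-1}$,
\[
\sum_{k=1}^{K}c_{k}\frac{\Gamma(A)}{\prod_{j=1}^{J}\Gamma(\alpha_{j}^{(k)})}\frac{\mathrm{e}^{\langle\alpha_{1:J-1}^{(k)},t\rangle}}{S(t)^{A}}=0.
\]
The factor $S(t)^{-A}$ is common to all terms because every atom has total $A$. It is also strictly positive, so it can be cancelled, leaving a finite linear combination of exponentials in $t$.

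The key point is that the exponent vectors $u_{k}=\alpha_{1:J-1}^{(k)}$ are pairwise distinct. On $\Theta_{A}$ the last coordinate is determined by $\alpha_{J}=A-\sum_{j<J}\alpha_{j}$, so if $u_{k}=u_{k'}$ then $\alpha^{(k)}=\alpha^{(k')}$. Lemma~\ref{lem:multivar_exp} then gives
\[
c_{k}\,\Gamma(A)\Big/\prod_{j=1}^{J}\Gamma(\alpha_{j}^{(k)})=0\quad\text{for each }k.
\]
Since the gamma factors are finite and positive, $c_{k}=0$ for all $k$.

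I do not expect a genuine obstacle here. The only point needing care is the distinctness of the $u_{k}$, and the fixed-total constraint supplies it directly. With linear independence established, Lemma~\ref{lem:linind_ident} also recovers Theorem~\ref{thm:fixedA}.
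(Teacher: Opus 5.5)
Your proof is correct, but it takes a different route from the paper. The paper does not redo the log-ratio computation. It deduces the corollary from Theorem~\ref{thm:fixedA} used as a black box:
\begin{itemize}
\item Given $\sum_i c_i f_{\alpha^{(i)}}=0$, it integrates over the simplex to get $\sum_{i\in I_+}c_i=-\sum_{i\in I_-}c_i=s$.
\item If $s>0$, it normalizes the positive and negative parts into two mixing measures $G_+$ and $G_-$ on $\Theta_A$ with $m_{G_+}=m_{G_-}$.
\item Identifiability then forces $G_+=G_-$, and since $I_+\cap I_-=\varnothing$, every $c_i=0$.
\end{itemize}

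You instead prove linear independence directly and can then recover Theorem~\ref{thm:fixedA} through Lemma~\ref{lem:linind_ident}. This is the same logical order the paper uses for $\Theta_{\mathrm{FD}}$ in Theorem~\ref{thm:linind_FD}. Your route also never uses that the kernels integrate to one, so it applies verbatim to unnormalized kernels such as the monomials $x^{u}$ that Lemma~\ref{lem:simplex_poly_sign} ultimately needs.

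The paper's route avoids repeating the computation. It also shows the general converse of Lemma~\ref{lem:linind_ident}: identifiability of finite mixtures of probability densities implies linear independence. The price is that it relies on the normalization of each $f_\alpha$.

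The proof of Theorem~\ref{thm:fixedA} already applies Lemma~\ref{lem:multivar_exp} to an exponential identity with arbitrary real coefficients. So your argument is essentially that step isolated, together with the distinctness observation for $\alpha_{1:J-1}$ on the slice.
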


\begin{proof}
Let $\alpha^{(1)},\dots,\alpha^{(M)}\in\Theta_{A}$ be distinct and
suppose that 
\[
\sum_{i=1}^{M}c_{i}f_{\alpha^{(i)}}(x)=0\qquad\text{a.e. on }\Delta_{J-1}^{\circ}.
\]
Define 
\[
I_{+}=\{i:c_{i}>0\},\qquad I_{-}=\{i:c_{i}<0\}.
\]
Integrating both sides over $\Delta_{J-1}^{\circ}$ and using that
each $f_{\alpha^{(i)}}$ is a density gives 
\[
\sum_{i\in I_{+}}c_{i}=-\sum_{i\in I_{-}}c_{i}=s\ge0.
\]
If $s=0$, then both sums are $0$, so $I_{+}=I_{-}=\varnothing$
and hence $c_{1}=\cdots=c_{M}=0$. Assume therefore that $s>0$. Define
two finite mixing measures on $\Theta_{A}$ by 
\[
G_{+}=\sum_{i\in I_{+}}\frac{c_{i}}{s}\,\delta_{\alpha^{(i)}}\qquad\text{and}\qquad G_{-}=\sum_{i\in I_{-}}\frac{-c_{i}}{s}\,\delta_{\alpha^{(i)}}.
\]
Then $m_{G_{+}}=m_{G_{-}}$ almost everywhere on $\Delta_{J-1}^{\circ}$,
so Theorem~\ref{thm:fixedA} implies $G_{+}=G_{-}$. Because the
parameters $\alpha^{(1)},\dots,\alpha^{(M)}$ are distinct and $I_{+}\cap I_{-}=\varnothing$,
this equality of measures forces every coefficient $c_{i}$ to be
$0$. Thus the family is linearly independent. 
\end{proof}

\subsection{The class $\mathcal{F}\left(\Theta_{\mathrm{FD}}\right)$}

Write $d=J-1$. In this subsection we use standard multi-index notation
on $\mathbb{R}^{d}$. Thus $\mathbb{N}_{0}=\{0\}\cup\mathbb{N}$,
and for $m=(m_{1},\dots,m_{d})\in\mathbb{N}_{0}^{d}$ we write $|m|=\sum_{j=1}^{d}m_{j}$,
$m!=\prod_{j=1}^{d}m_{j}!$, and $y^{m}=\prod_{j=1}^{d}y_{j}^{m_{j}}$.
The proof uses the following two elementary lemmas. 
\begin{lem}
\label{lem:disjoint} Let $u,u'\in(-1,0)^{d}$ and $m,m'\in\mathbb{N}_{0}^{d}$.
If $u+m=u'+m'$, then $u=u'$ and $m=m'$. 
\end{lem}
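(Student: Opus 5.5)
The argument is coordinatewise, so I will reduce to the scalar case and compare integer and fractional parts. Fix $j\in\{1,\dots,d\}$. From $u+m=u'+m'$ I get $u_{j}+m_{j}=u'_{j}+m'_{j}$, and rearranging gives
\[
u_{j}-u'_{j}=m'_{j}-m_{j}\in\mathbb{Z}.
\]
Since $u_{j},u'_{j}\in(-1,0)$, the difference satisfies $-1<u_{j}-u'_{j}<1$. The only integer in the open interval $(-1,1)$ is $0$, so $u_{j}=u'_{j}$. Substituting back into the scalar equation then gives $m_{j}=m'_{j}$.

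Doing this for every $j=1,\dots,d$ yields $u=u'$ and $m=m'$. An equivalent view is that $m_{j}+u_{j}$ determines $m_{j}=\lceil m_{j}+u_{j}\rceil$ and $u_{j}=(m_{j}+u_{j})-m_{j}$, so the decomposition is unique.

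There is no real obstacle here. The one point that needs care is the openness of the interval $(-1,0)$: it is what guarantees the difference of two $u$-coordinates lies strictly between $-1$ and $1$, so it cannot be a nonzero integer. This is exactly the role the constraint $0<\alpha_{j}<1$ in $\Theta_{\mathrm{FD}}$ will play later, with $u_{j}=\alpha_{j}-1\in(-1,0)$, when exponents from distinct kernels are shown not to collide in the series expansions.
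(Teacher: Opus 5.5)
Your proof is correct and is essentially the paper's argument: the paper writes $u-u'=m'-m\in\mathbb{Z}^{d}$, notes $u-u'\in(-1,1)^{d}$, and concludes that the only integer vector there is $0$. Your coordinatewise version is the same reasoning written one coordinate at a time.
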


\begin{proof}
Rearrange to $u-u'=m'-m\in\mathbb{Z}^{d}$. But $u-u'\in(-1,1)^{d}$,
and the only integer vector in $(-1,1)^{d}$ is $0$. Hence $u=u'$
and then $m=m'$. 
\end{proof}
\begin{lem}
\label{lem:dirichlet_series} Let $\{\lambda_{n}\}_{n\ge1}$ be a
strictly increasing sequence of real numbers with $\lambda_{n}\to\infty$.
Suppose $\sum_{n\ge1}a_{n}\mathrm{e}^{-\lambda_{n}s}$ converges absolutely
for all $s\ge s_{0}$ and 
\[
\sum_{n\ge1}a_{n}\mathrm{e}^{-\lambda_{n}s}=0\qquad\forall s\ge s_{0}.
\]
Then $a_{n}=0$ for all $n$. 
\end{lem}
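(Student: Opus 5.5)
We would prove Lemma~\ref{lem:dirichlet_series} by peeling off the leading coefficient, as in the proof of Lemma~\ref{lem:multivar_exp}, with a dominated-convergence argument replacing the finite-sum limit. The plan is to show $a_{1}=0$ and then induct. Multiply the identity by $\mathrm{e}^{\lambda_{1}s}$ to obtain
\[
a_{1}+\sum_{n\ge2}a_{n}\mathrm{e}^{-(\lambda_{n}-\lambda_{1})s}=0\qquad\forall s\ge s_{0}.
\]
It then suffices to show that the tail $R_{1}(s)=\sum_{n\ge2}a_{n}\mathrm{e}^{-(\lambda_{n}-\lambda_{1})s}$ tends to $0$ as $s\to\infty$, which forces $a_{1}=0$.

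To control the tail, note that for $s\ge s_{0}$ and $n\ge2$,
\[
|a_{n}|\mathrm{e}^{-(\lambda_{n}-\lambda_{1})s}=|a_{n}|\mathrm{e}^{-(\lambda_{n}-\lambda_{1})s_{0}}\,\mathrm{e}^{-(\lambda_{n}-\lambda_{1})(s-s_{0})}\le|a_{n}|\mathrm{e}^{-(\lambda_{n}-\lambda_{1})s_{0}}\,\mathrm{e}^{-(\lambda_{2}-\lambda_{1})(s-s_{0})},
\]
using $\lambda_{n}\ge\lambda_{2}>\lambda_{1}$. Summing over $n\ge2$ gives
\[
|R_{1}(s)|\le\mathrm{e}^{-(\lambda_{2}-\lambda_{1})(s-s_{0})}\,\mathrm{e}^{\lambda_{1}s_{0}}\sum_{n\ge2}|a_{n}|\mathrm{e}^{-\lambda_{n}s_{0}}.
\]
The sum is finite by absolute convergence at $s=s_{0}$, so $R_{1}(s)\to0$ exponentially fast. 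Hence $a_{1}=0$.

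With $a_{1}=0$, the hypotheses hold for the shifted sequence $(\lambda_{n},a_{n})_{n\ge2}$ with the same $s_{0}$, since it is still strictly increasing, absolutely convergent, and vanishing identically. Induction on $N$ then gives $a_{N}=0$ for every $N$. Each stage is a finite step, so no transfinite issue arises.

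The only delicate point is the uniform domination in the tail estimate, which is needed because the series is infinite. It relies on the strict gap $\lambda_{2}-\lambda_{1}>0$ and on absolute convergence at the single point $s_{0}$. The hypothesis $\lambda_{n}\to\infty$ is not actually needed beyond guaranteeing that the enumeration is well ordered.
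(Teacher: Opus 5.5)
Your proposal is correct and follows the paper's proof: multiply by $\mathrm{e}^{\lambda_{1}s}$, use absolute convergence at $s_{0}$ to show the tail vanishes as $s\to\infty$, conclude $a_{1}=0$, and induct. Your explicit bound $|R_{1}(s)|\le\mathrm{e}^{-(\lambda_{2}-\lambda_{1})(s-s_{0})}\mathrm{e}^{\lambda_{1}s_{0}}\sum_{n\ge2}|a_{n}|\mathrm{e}^{-\lambda_{n}s_{0}}$ is simply a quantitative version of the paper's dominated-convergence step, which uses the same dominating terms $|a_{n}|\mathrm{e}^{-(\lambda_{n}-\lambda_{1})s_{0}}$.
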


\begin{proof}
Multiply by $\mathrm{e}^{\lambda_{1}s}$ to get 
\[
a_{1}+\sum_{n\ge2}a_{n}\mathrm{e}^{-(\lambda_{n}-\lambda_{1})s}=0.
\]
As $s\to\infty$, $\mathrm{e}^{-(\lambda_{n}-\lambda_{1})s}\to0$
for all $n\ge2$. Moreover, for all $s\ge s_{0}$ and $n\ge2$, 
\[
\left|a_{n}\mathrm{e}^{-(\lambda_{n}-\lambda_{1})s}\right|\le|a_{n}|\mathrm{e}^{-(\lambda_{n}-\lambda_{1})s_{0}},
\]
and the series $\sum_{n\ge2}|a_{n}|\mathrm{e}^{-(\lambda_{n}-\lambda_{1})s_{0}}$
converges by the assumed absolute convergence at $s_{0}$. Dominated
convergence therefore justifies passing the limit inside the sum.
Hence $a_{1}=0$. Subtract the $n=1$ term and repeat, inductively. 
\end{proof}
Define the chart $\varphi:\mathbb{R}_{>0}^{d}\to\Delta_{J-1}^{\circ}$
by $\varphi\left(y\right)=\left(x_{1}\left(y\right),\dots,x_{J}\left(y\right)\right)$,
\[
Y=\sum_{j=1}^{d}y_{j},\qquad x_{j}(y)=\frac{y_{j}}{1+Y}\ (1\le j\le d),\qquad x_{J}(y)=\frac{1}{1+Y}.
\]
This map is a bijection from $\mathbb{R}_{>0}^{d}$ onto $\Delta_{J-1}^{\circ}$. 
\begin{lem}
\label{lem:chart_jac} The Jacobian determinant of $y\mapsto(x_{1}(y),\dots,x_{d}(y))$
is $(1+Y)^{-J}$. 
\end{lem}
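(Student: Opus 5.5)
We compute the Jacobian matrix of $y\mapsto(x_{1}(y),\dots,x_{d}(y))$ explicitly and evaluate its determinant with the matrix determinant lemma, mirroring the proof of Lemma~\ref{lem:alr_jac}. Since $x_{i}(y)=y_{i}/(1+Y)$ and $\partial Y/\partial y_{j}=1$, the quotient rule gives, for $1\le i,j\le d$,
\[
\frac{\partial x_{i}}{\partial y_{j}}=\frac{\mathbf{1}\{i=j\}}{1+Y}-\frac{y_{i}}{(1+Y)^{2}}.
\]
In matrix form the Jacobian is therefore
\[
M=\frac{1}{1+Y}\left(I_{d}-\frac{1}{1+Y}\,y\mathbf{1}^{\top}\right),
\]
where $y=(y_{1},\dots,y_{d})^{\top}$ and $\mathbf{1}\in\mathbb{R}^{d}$ is the all-ones vector.

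Factoring out the scalar contributes $(1+Y)^{-d}$ to the determinant. The remaining factor is a rank-one perturbation of the identity. By the matrix determinant lemma, $\det(I_{d}-vw^{\top})=1-w^{\top}v$. Taking $v=y/(1+Y)$ and $w=\mathbf{1}$, we get $w^{\top}v=Y/(1+Y)$, so the determinant of this factor is $1-Y/(1+Y)=1/(1+Y)$.

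Multiplying the two contributions yields $\det M=(1+Y)^{-d}(1+Y)^{-1}=(1+Y)^{-(d+1)}=(1+Y)^{-J}$, using $d=J-1$. This quantity is positive on $\mathbb{R}_{>0}^{d}$, so it also equals the absolute value of the determinant.

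There is no real obstacle here. The only points needing care are to keep the rank-one term in the form $y\mathbf{1}^{\top}$ (as opposed to $\mathbf{1}y^{\top}$) and to recognise that the scalar prefactor raised to the power $d$, together with the extra $1/(1+Y)$ from the rank-one correction, combine to the exponent $J$.
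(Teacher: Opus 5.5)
Your proposal is correct and is essentially the paper's proof: the same quotient-rule Jacobian, written as a scaled rank-one perturbation $I-(1+Y)^{-1}y\mathbf{1}^{\top}$ of the identity and evaluated with the matrix determinant lemma. Your remark that the determinant is positive, and so equals its absolute value (the form used in $h_{\alpha}$), is a small addition the paper leaves implicit.
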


\begin{proof}
For $1\le i,j\le d$, 
\[
\frac{\partial x_{i}}{\partial y_{j}}=\frac{\delta_{ij}(1+Y)-y_{i}}{(1+Y)^{2}}.
\]
Thus the Jacobian matrix equals $(1+Y)^{-2}\left((1+Y)I-y\mathbf{1}^{\top}\right)$.
Factor $(1+Y)$ to get 
\[
\det\left(\frac{\partial x}{\partial y}\right)=(1+Y)^{-2d}(1+Y)^{d}\det\left(I-(1+Y)^{-1}y\mathbf{1}^{\top}\right).
\]
By the matrix determinant lemma, $\det(I-ab^{\top})=1-b^{\top}a$.
Here $a=(1+Y)^{-1}y$ and $b=\mathbf{1}$, so $b^{\top}a=(1+Y)^{-1}\sum_{j=1}^{d}y_{j}=Y/(1+Y)$.
Hence the determinant is $1-Y/(1+Y)=1/(1+Y)$. Combining factors gives
$(1+Y)^{-2d}(1+Y)^{d}(1+Y)^{-1}=(1+Y)^{-(d+1)}=(1+Y)^{-J}$. 
\end{proof}
For $\alpha\in(0,\infty)^{J}$, define the transported kernel on $\mathbb{R}_{>0}^{d}$
by 
\[
h_{\alpha}(y)=f_{\alpha}(x(y))\left|\det\left(\frac{\partial(x_{1},\dots,x_{d})}{\partial(y_{1},\dots,y_{d})}\right)\right|.
\]
By Lemma~\ref{lem:chart_jac}, for $y\in\mathbb{R}_{>0}^{d}$, 
\begin{equation}
h_{\alpha}(y)=\frac{\Gamma(\alpha_{+})}{\prod_{j=1}^{J}\Gamma(\alpha_{j})}\frac{\prod_{j=1}^{d}y_{j}^{\alpha_{j}-1}}{(1+Y)^{\alpha_{+}}}.\label{eq:transported_kernel}
\end{equation}
Equation~\eqref{eq:transported_kernel} is the density of the inverted
Dirichlet distribution; see, for example, \citet[Ch. 49]{kotz2019continuous}.
When $J=2$, this reduces to the beta prime distribution. 
\begin{thm}
\label{thm:linind_FD} The family $\{f_{\alpha}:\alpha\in\Theta_{\mathrm{FD}}\}$
is linearly independent. Consequently, the finite Dirichlet mixture
class $\mathcal{F}\left(\Theta_{\mathrm{FD}}\right)$ is identifiable. 
\end{thm}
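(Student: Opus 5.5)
The plan is to transport the relation to the positive orthant via the chart $\varphi$, expand near the origin, and compare exponents. The expansion should separate distinct parameters because the first $d$ coordinates lie in a box of side less than one, where Lemma~\ref{lem:disjoint} applies. Once linear independence is proved, identifiability follows from Lemma~\ref{lem:linind_ident}.

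Suppose $\sum_{k=1}^{K}c_{k}f_{\alpha^{(k)}}=0$ a.e.\ on $\Delta_{J-1}^{\circ}$ with distinct $\alpha^{(k)}\in\Theta_{\mathrm{FD}}$. Each kernel is continuous, so the identity holds everywhere, as in the proof of Theorem~\ref{thm:fixedA}. Multiplying by the Jacobian and using \eqref{eq:transported_kernel}, we get $\sum_{k}c_{k}C_{k}\,y^{u^{(k)}}(1+Y)^{-\alpha_{+}^{(k)}}=0$ for all $y\in\mathbb{R}_{>0}^{d}$. Here $C_{k}>0$ is the Dirichlet normalizing constant and $u^{(k)}=\alpha_{1:d}^{(k)}-\mathbf{1}\in(-1,0)^{d}$.

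For $y$ with $Y<1$, expand each factor by the multinomial theorem applied to the binomial series:
\[
(1+Y)^{-a}=\sum_{n\ge0}\binom{-a}{n}Y^{n}=\sum_{m\in\mathbb{N}_{0}^{d}}\binom{-a}{|m|}\frac{|m|!}{m!}y^{m}.
\]
This series converges absolutely for $Y<1$. The relation therefore becomes
\[
\sum_{k}\sum_{m}c_{k}C_{k}\binom{-\alpha_{+}^{(k)}}{|m|}\frac{|m|!}{m!}\,y^{u^{(k)}+m}=0 .
\]
By Lemma~\ref{lem:disjoint}, exponents $u^{(k)}+m$ coming from different $u^{(k)}$ never coincide. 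The same lemma shows that exponents with the same $u^{(k)}$ but different $m$ are distinct.

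To extract coefficients, I would group by the distinct vectors $u$ and restrict to a curve $y_{j}=\mathrm{e}^{-w_{j}s}$. The vector $w\in\mathbb{R}_{>0}^{d}$ is chosen so that $\langle u+m,w\rangle$ is injective over the countable set of all exponents $u^{(k)}+m$. This is possible because only countably many hyperplanes must be avoided, and a generic $w$ avoids them all.

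The main obstacle is that the exponents $\lambda=\langle u+m,w\rangle$ must be arranged as a strictly increasing sequence tending to infinity, as Lemma~\ref{lem:dirichlet_series} requires. They do tend to infinity, since $w>0$ and $m\ge0$ with only finitely many $u$. They are also bounded below and have only finitely many values below each bound, so they can be enumerated in increasing order. Absolute convergence holds for large $s$, because then $Y<1$ along the curve and the series is dominated by that of $(1-Y)^{-a}$.

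Lemma~\ref{lem:dirichlet_series} then forces every coefficient to vanish. Several $k$ may share the same $u$, namely parameters that differ only in $\alpha_{J}$. To handle this, take the coefficient at $m=0$ and then at $m=n e_{1}$ for $n\ge0$. Summing over the group gives $\sum_{k:u^{(k)}=u}c_{k}C_{k}\binom{-\alpha_{+}^{(k)}}{n}=0$ for all $n$.

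Within a group the totals $\alpha_{+}^{(k)}$ are distinct. The functions $n\mapsto\binom{-a}{n}$ for distinct $a$ are linearly independent: they are the Taylor coefficients of $(1+z)^{-a}$, and vanishing of all of them gives $\sum c_{k}C_{k}(1+z)^{-a_{k}}=0$ on $|z|<1$. Setting $1+z=\mathrm{e}^{-t}$ yields a finite sum of distinct exponentials that vanishes on an interval. By analyticity it vanishes everywhere, so Lemma~\ref{lem:multivar_exp} with $d=1$ applies. Hence all $c_{k}=0$, which gives linear independence, and Lemma~\ref{lem:linind_ident} gives identifiability.
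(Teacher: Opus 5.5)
Your proposal is correct and follows the paper's proof essentially step for step. Both transport by $\varphi$ and use the local binomial--multinomial expansion, separate exponents via Lemma~\ref{lem:disjoint}, then restrict to a generic exponential curve and apply Lemma~\ref{lem:dirichlet_series}; within each $u$-group, both use a generating-function argument turning $\sum_k c_kC_k\binom{-\alpha_+^{(k)}}{n}=0$ into a vanishing sum of distinct exponentials. The differences are cosmetic: you work on $Y<1$ instead of $Y<1/2$, use binomial coefficients instead of Pochhammer symbols, and invoke analyticity plus Lemma~\ref{lem:multivar_exp} where the paper divides by the leading exponential and lets $r\to\infty$.
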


\begin{proof}
By Lemma~\ref{lem:linind_ident}, it is enough to prove linear independence.
Let $\alpha^{(1)},\dots,\alpha^{(K)}\in\Theta_{\mathrm{FD}}$ be distinct
and suppose that 
\[
\sum_{k=1}^{K}c_{k}f_{\alpha^{(k)}}(x)=0\qquad\text{for a.e. }x\in\Delta_{J-1}^{\circ}.
\]
The left-hand side is continuous on $\Delta_{J-1}^{\circ}$, so the
identity in fact holds for all $x\in\Delta_{J-1}^{\circ}$.

Transport this identity by the bijection $y\mapsto\varphi(y)$. Multiplying
by the Jacobian determinant from Lemma~\ref{lem:chart_jac}, which
is strictly positive and does not depend on $\alpha$, yields 
\begin{equation}
\sum_{k=1}^{K}c_{k}h_{\alpha^{(k)}}(y)=0\qquad\text{for all }y\in\mathbb{R}_{>0}^{d}.\label{eq:FD_transport_identity}
\end{equation}
Fix the open neighbourhood 
\[
U=\{y\in\mathbb{R}_{>0}^{d}:Y<1/2\}.
\]
For $v>0$ and $y\in U$, the binomial series gives 
\begin{equation}
(1+Y)^{-v}=\sum_{n=0}^{\infty}(-1)^{n}\frac{(v)_{n}}{n!}Y^{n},\qquad(v)_{n}=v(v+1)\cdots(v+n-1),\label{eq:FD_binomial}
\end{equation}
and for each $n\in\mathbb{N}_{0}$ the multinomial theorem gives 
\begin{equation}
Y^{n}=(y_{1}+\cdots+y_{d})^{n}=\sum_{\substack{m\in\mathbb{N}_{0}^{d}\\
|m|=n
}
}\frac{n!}{m!}y^{m}.\label{eq:FD_multinomial}
\end{equation}
Now fix $\alpha\in\Theta_{\mathrm{FD}}$ and set 
\[
u(\alpha)=(\alpha_{1}-1,\dots,\alpha_{d}-1)\in(-1,0)^{d},\qquad v(\alpha)=\alpha_{+}.
\]
Substituting \eqref{eq:FD_binomial} and \eqref{eq:FD_multinomial}
into \eqref{eq:transported_kernel} yields, for $y\in U$, 
\begin{equation}
h_{\alpha}(y)=\frac{\Gamma(v(\alpha))}{\prod_{j=1}^{J}\Gamma(\alpha_{j})}\sum_{m\in\mathbb{N}_{0}^{d}}(-1)^{|m|}\frac{(v(\alpha))_{|m|}}{m!}y^{u(\alpha)+m}.\label{eq:FD_series_expansion}
\end{equation}
For each fixed $y\in U$, the series in \eqref{eq:FD_series_expansion}
is absolutely convergent. Indeed, using \eqref{eq:FD_multinomial},
\[
\sum_{m\in\mathbb{N}_{0}^{d}}\frac{(v(\alpha))_{|m|}}{m!}y^{m}=\sum_{n=0}^{\infty}(v(\alpha))_{n}\sum_{\substack{m\in\mathbb{N}_{0}^{d}\\
|m|=n
}
}\frac{y^{m}}{m!}=\sum_{n=0}^{\infty}(v(\alpha))_{n}\frac{Y^{n}}{n!},
\]
and the last series converges absolutely because $Y<1$. Since $y^{u(\alpha)}=\prod_{j=1}^{d}y_{j}^{\alpha_{j}-1}$
is finite for every $y\in U$, \eqref{eq:FD_series_expansion} is
pointwise absolutely convergent on $U$.

Substituting \eqref{eq:FD_series_expansion} into \eqref{eq:FD_transport_identity}
is legitimate for each $y\in U$, since we are summing only finitely
many pointwise absolutely convergent series. Moreover, if
\[
u(\alpha^{(k)})+m=u(\alpha^{(k')})+m',
\]
then
\[
u(\alpha^{(k)})-u(\alpha^{(k')})=m'-m\in\mathbb{Z}^{d},
\]
so Lemma~\ref{lem:disjoint} implies that $u(\alpha^{(k)})=u(\alpha^{(k')})$,
and then necessarily $m=m'$. Thus coincidences among exponent vectors
can occur only among terms having the same value of $u(\alpha)$, and
regrouping first by the distinct values of $u(\alpha)$ is unambiguous.
Let $u_{1},\dots,u_{L}\in(-1,0)^{d}$ be the distinct values taken
by $u(\alpha^{(k)})$, and let $\mathcal{G}_{g}$ be the set of indices
$k$ such that $u(\alpha^{(k)})=u_{g}$. For $k\in\mathcal{G}_{g}$,
write 
\[
v_{k}=\alpha_{+}^{(k)},\qquad d_{k}=c_{k}\frac{\Gamma(v_{k})}{\prod_{j=1}^{J}\Gamma(\alpha_{j}^{(k)})}.
\]
Then we obtain 
\begin{equation}
\sum_{g=1}^{L}\sum_{m\in\mathbb{N}_{0}^{d}}A_{g,m}y^{u_{g}+m}=0\qquad\text{for all }y\in U,\label{eq:FD_regrouped}
\end{equation}
where 
\begin{equation}
A_{g,m}=\sum_{k\in\mathcal{G}_{g}}d_{k}(-1)^{|m|}\frac{(v_{k})_{|m|}}{m!}.\label{eq:FD_Agm}
\end{equation}
By Lemma~\ref{lem:disjoint}, all exponent vectors $u_{g}+m$ appearing
in \eqref{eq:FD_regrouped} are distinct.

Let 
\[
W=\{u_{g}+m:1\le g\le L,\ m\in\mathbb{N}_{0}^{d}\}.
\]
This is a countable subset of $\mathbb{R}^{d}$. For any two distinct
vectors $w,w'\in W$, the constraint $\langle w,\lambda\rangle=\langle w',\lambda\rangle$
defines a hyperplane in $\mathbb{R}^{d}$. The union of these hyperplanes
over all pairs $w\neq w'$ is therefore a countable union of Lebesgue-null
sets, hence cannot cover the positive orthant $\mathbb{R}_{>0}^{d}$.
Choose $\lambda\in\mathbb{R}_{>0}^{d}$ outside this union. Then the
scalar products $\langle w,\lambda\rangle$ are pairwise distinct
for $w\in W$.

For $s\ge0$, define 
\[
y(s)=(\mathrm{e}^{-\lambda_{1}s},\dots,\mathrm{e}^{-\lambda_{d}s}).
\]
Since each $\lambda_{j}>0$, we have $Y(s)=\sum_{j=1}^{d}\mathrm{e}^{-\lambda_{j}s}\to0$,
so $y(s)\in U$ for all $s\ge s_{0}$ for some $s_{0}$. Substituting
$y=y(s)$ into \eqref{eq:FD_regrouped} gives 
\begin{equation}
\sum_{g=1}^{L}\sum_{m\in\mathbb{N}_{0}^{d}}A_{g,m}\mathrm{e}^{-\mu_{g,m}s}=0\qquad\text{for all }s\ge s_{0},\label{eq:FD_dirichlet_series}
\end{equation}
where $\mu_{g,m}=\langle u_{g}+m,\lambda\rangle$. For each fixed
$s\ge s_{0}$, the series in \eqref{eq:FD_dirichlet_series} is absolutely
convergent because \eqref{eq:FD_regrouped} is absolutely convergent
at $y(s)\in U$. Moreover, the exponents $\mu_{g,m}$ are all distinct,
and they can be arranged into a strictly increasing sequence diverging
to $\infty$. Indeed, if $\lambda_{\min}=\min_{1\le j\le d}\lambda_{j}$
and $C=\min_{1\le g\le L}\langle u_{g},\lambda\rangle$, then 
\[
\mu_{g,m}\ge C+\langle m,\lambda\rangle\ge C+\lambda_{\min}|m|,
\]
so only finitely many pairs $(g,m)$ satisfy $\mu_{g,m}\le M$ for
a given $M$. Lemma~\ref{lem:dirichlet_series} therefore applies
to \eqref{eq:FD_dirichlet_series} and yields 
\begin{equation}
A_{g,m}=0\qquad\text{for all }g\in\{1,\dots,L\}\text{ and all }m\in\mathbb{N}_{0}^{d}.\label{eq:FD_Agm_zero}
\end{equation}

Now fix $g\in\{1,\dots,L\}$. Combining \eqref{eq:FD_Agm} and \eqref{eq:FD_Agm_zero}
gives 
\begin{equation}
\sum_{k\in\mathcal{G}_{g}}d_{k}(v_{k})_{|m|}=0\qquad\text{for all }m\in\mathbb{N}_{0}^{d}.\label{eq:FD_group_pochhammer}
\end{equation}
In particular, taking $m=(n,0,\dots,0)$ gives 
\begin{equation}
\sum_{k\in\mathcal{G}_{g}}d_{k}(v_{k})_{n}=0\qquad\text{for all }n\in\mathbb{N}_{0}.\label{eq:FD_group_pochhammer_n}
\end{equation}
Multiply \eqref{eq:FD_group_pochhammer_n} by $z^{n}/n!$ and sum
over $n\ge0$. Since $\sum_{n\ge0}(v)_{n}z^{n}/n!=(1-z)^{-v}$ for
$|z|<1$, we obtain 
\begin{equation}
\sum_{k\in\mathcal{G}_{g}}d_{k}(1-z)^{-v_{k}}=0\qquad\text{for all }z\in(0,1).\label{eq:FD_group_binomial}
\end{equation}
Now set $r=-\log(1-z)\in\mathbb{R}_{>0}$. Then \eqref{eq:FD_group_binomial}
becomes 
\begin{equation}
\sum_{k\in\mathcal{G}_{g}}d_{k}\mathrm{e}^{v_{k}r}=0\qquad\text{for all }r>0.\label{eq:FD_group_exponential}
\end{equation}
The scalars $\{v_{k}:k\in\mathcal{G}_{g}\}$ are distinct. Indeed,
within a fixed group $\mathcal{G}_{g}$ the first $d=J-1$ coordinates
of $\alpha^{(k)}$ are fixed, so equality $v_{k}=v_{k'}$ would also
force 
\[
\alpha_{J}^{(k)}=v_{k}-\sum_{j=1}^{d}\alpha_{j}^{(k)}=v_{k'}-\sum_{j=1}^{d}\alpha_{j}^{(k')}=\alpha_{J}^{(k')},
\]
and hence $\alpha^{(k)}=\alpha^{(k')}$, contrary to the distinctness
of the parameters. Writing $\mathcal{G}_{g}=\{k_{1},\dots,k_{r}\}$
and ordering so that $v_{k_{1}}<\cdots<v_{k_{r}}$, we divide \eqref{eq:FD_group_exponential}
by $\mathrm{e}^{v_{k_{r}}r}$ and let $r\to\infty$. This shows that
the coefficient of $\mathrm{e}^{v_{k_{r}}r}$ is $0$. Repeating the
argument gives $d_{k}=0$ for every $k\in\mathcal{G}_{g}$.

Finally, the factor $\Gamma(v_{k})/\prod_{j=1}^{J}\Gamma(\alpha_{j}^{(k)})$
is nonzero, so $d_{k}=0$ implies $c_{k}=0$ for all $k\in\mathcal{G}_{g}$.
Since $g$ was arbitrary, we conclude that $c_{1}=\cdots=c_{K}=0$.
This proves linear independence, and the identifiability statement
then follows from Lemma~\ref{lem:linind_ident}. 
\end{proof}
\begin{rem}
\label{rem:interval_box} The proof uses only that $u(\alpha)=(\alpha_{1}-1,\dots,\alpha_{J-1}-1)$
lies in a box of side length strictly less than $1$ so that Lemma~\ref{lem:disjoint}
applies. More generally, one may replace $(0,1)$ by intervals $I_{j}$
of length $<1$ for $j=1,\dots,J-1$ and obtain identifiability on
\[
\{\alpha:\alpha_{j}\in I_{j}\ (1\le j\le J-1),\alpha_{J}>0\},
\]
and similarly after choosing any other baseline coordinate in place
of $J$. 
\end{rem}

\begin{rem}
\label{rem:inverted_transfer} Because $\varphi$ is bijective and
the Jacobian factor in the definition of $h_{\alpha}$ does not depend
on $\alpha$, a finite linear identity among the kernels $\{f_{\alpha}:\alpha\in\Theta\}$
on $\Delta_{J-1}^{\circ}$ is equivalent to the corresponding identity
among the transported kernels $\{h_{\alpha}:\alpha\in\Theta\}$ on
$\mathbb{R}_{>0}^{d}$. Consequently, every identifiability or non-identifiability
statement for finite Dirichlet mixtures supported on $\Theta\subseteq\mathbb{R}_{>0}^{J}$
transfers verbatim to the corresponding class of finite inverted Dirichlet
mixtures, and conversely. 
\end{rem}

\section{Identifiability of $\mathcal{F}_{J-1}$}

\label{sec:Identifiability-of-FJ-1}

Section~\ref{sec:Non-identifiability} shows that on the full parameter
space $\Theta=\mathbb{R}_{>0}^{J}$ the Dirichlet family is not linearly
independent: the unit-shift relation of Theorem~\ref{thm:shift}
yields an identity between one Dirichlet kernel and $J$ Dirichlet
kernels. In this section we show that, in a precise sense, this is
the minimal obstruction detected by our analysis. More exactly, every
nontrivial linear identity among Dirichlet kernels must involve at least
$J$ coefficients of the same sign. It follows that mixtures with fewer
than $J$ components are identifiable on the full parameter space.

We continue to write $d=J-1$ and to use the chart $\varphi$ and
the transported kernels $h_{\alpha}$ introduced in Section~\ref{sec:Identifiable-restrictions}.
Since Lemma~\ref{lem:chart_jac} gives 
\[
\left|\det\left(\frac{\partial(x_{1},\dots,x_{d})}{\partial(y_{1},\dots,y_{d})}\right)\right|=(1+Y)^{-J}=x_{J}(y)^{J},
\]
equation~\eqref{eq:transported_kernel} yields 
\begin{equation}
f_{\alpha}(x(y))=x_{J}(y)^{-J}h_{\alpha}(y),\qquad x_{J}(y)=\frac{1}{1+Y}.\label{eq:f_vs_h}
\end{equation}
In particular, a finite linear identity among the kernels $\{f_{\alpha}\}$
is equivalent to the corresponding identity among the transported
kernels $\{h_{\alpha}\}$, and the sign pattern of the coefficients
is unchanged.

The argument proceeds in three steps. We first extract coefficients
from the local expansion of a linear combination of transported kernels
near $y=0$. We then show that any such identity splits according
to the congruence class of the parameter modulo $\mathbb{Z}^{J}$.
Finally, we reduce the problem to a combinatorial statement about
polynomial identities on the simplex.

Let 
\[
U=\{y\in\mathbb{R}_{>0}^{d}:Y<1/2\}.
\]
For $\alpha\in\mathbb{R}_{>0}^{J}$, equation~\eqref{eq:transported_kernel}
together with the binomial series and the multinomial theorem yields
the absolutely convergent expansion 
\begin{equation}
h_{\alpha}(y)=\frac{\Gamma(\alpha_{+})}{\prod_{j=1}^{J}\Gamma(\alpha_{j})}\sum_{m\in\mathbb{N}_{0}^{d}}\frac{(-1)^{|m|}(\alpha_{+})_{|m|}}{m!}\,y^{(\alpha_{1}-1,\dots,\alpha_{d}-1)+m},\qquad y\in U.\label{eq:h_series_full}
\end{equation}
The following suite of results implements this program. 
\begin{lem}
\label{lem:coeff_extract_full} Let $\alpha^{(1)},\ldots,\alpha^{(M)}\in\mathbb{R}_{>0}^{J}$
be distinct and let $c_{1},\ldots,c_{M}\in\mathbb{R}$. Assume 
\begin{equation}
\sum_{i=1}^{M}c_{i}\,h_{\alpha^{(i)}}(y)=0,\qquad\text{for all }y\in\mathbb{R}_{>0}^{d}.\label{eq:lincomb_h_zero}
\end{equation}
For $i=1,\ldots,M$ define 
\[
u_{i}=(\alpha_{1}^{(i)}-1,\ldots,\alpha_{d}^{(i)}-1)\in(-1,\infty)^{d},\qquad v_{i}=\alpha_{+}^{(i)}>0,\qquad b_{i}=c_{i}\,\frac{\Gamma(v_{i})}{\prod_{j=1}^{J}\Gamma(\alpha_{j}^{(i)})}.
\]
Then for every $w\in(-1,\infty)^{d}$, 
\begin{equation}
\sum_{i=1}^{M}\;\sum_{\substack{m\in\mathbb{N}_{0}^{d}\\
u_{i}+m=w
}
}b_{i}\,\frac{(-1)^{|m|}\,(v_{i})_{|m|}}{m!}=0,\label{eq:coeff_extract_full}
\end{equation}
and for fixed $w$, the inner sum is finite. 
\end{lem}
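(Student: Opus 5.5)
The argument mirrors the coefficient-extraction step in the proof of Theorem~\ref{thm:linind_FD}, with two differences. Lemma~\ref{lem:disjoint} is no longer available, and the coefficients must be grouped by the full exponent vector $w$ rather than by $u(\alpha)$.

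First, finiteness of the inner sum. For fixed $w$ and $i$, the condition $u_i+m=w$ determines $m=w-u_i$ uniquely. So each inner sum has at most one term, and the double sum has at most $M$ terms.

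Next, restrict \eqref{eq:lincomb_h_zero} to $U$ and substitute the expansion \eqref{eq:h_series_full}, which converges absolutely on $U$. This gives
\[
\sum_{i=1}^{M}\sum_{m\in\mathbb{N}_0^d} b_i\frac{(-1)^{|m|}(v_i)_{|m|}}{m!}\,y^{u_i+m}=0,\qquad y\in U.
\]
Let $W=\{u_i+m: 1\le i\le M,\ m\in\mathbb{N}_0^d\}\subset(-1,\infty)^d$, which is countable. Collect all terms sharing the same exponent $w\in W$. The coefficient $C_w$ attached to $y^w$ is then exactly the left-hand side of \eqref{eq:coeff_extract_full}. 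Because the total family $\sum_{i,m}|b_i|(v_i)_{|m|}y^{u_i+m}/m!$ is finite on $U$ (as computed in the proof of Theorem~\ref{thm:linind_FD}), this regrouping is legitimate and yields
\[
\sum_{w\in W} C_w y^w=0\quad\text{on }U,
\]
with the family $\sum_{w\in W}|C_w|y^w$ convergent.

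Now choose $\lambda\in\mathbb{R}_{>0}^d$ outside the countable union of hyperplanes $\{\lambda:\langle w-w',\lambda\rangle=0\}$ over distinct $w,w'\in W$. Set $y(s)=(\mathrm{e}^{-\lambda_1 s},\dots,\mathrm{e}^{-\lambda_d s})$, which lies in $U$ for all $s\ge s_0$. This gives
\[
\sum_{w\in W}C_w\mathrm{e}^{-\langle w,\lambda\rangle s}=0,\qquad s\ge s_0,
\]
with pairwise distinct exponents. These exponents satisfy $\langle w,\lambda\rangle\ge \min_i\langle u_i,\lambda\rangle+\lambda_{\min}|m|$. Hence only finitely many lie below any bound, and they can be listed as a strictly increasing sequence tending to $\infty$. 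Lemma~\ref{lem:dirichlet_series} then gives $C_w=0$ for all $w\in W$. For $w\notin W$ the sum in \eqref{eq:coeff_extract_full} is empty, so it vanishes trivially.

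The step needing the most care is the regrouping. Without Lemma~\ref{lem:disjoint}, several pairs $(i,m)$ can share a single exponent $w$. Merging them is justified by the absolute summability of the full double family on $U$, together with the bound $|C_w|\le\sum_{(i,m):u_i+m=w}|\cdot|$. With that in place, the absolute-convergence hypothesis of Lemma~\ref{lem:dirichlet_series} holds along $y(s)$.
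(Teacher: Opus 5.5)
Your proposal is correct and follows essentially the same route as the paper. Both substitute the absolutely convergent expansion on $U$, regroup by exponent $w\in W$, restrict to $y(s)=(\mathrm{e}^{-\lambda_1 s},\dots,\mathrm{e}^{-\lambda_d s})$ with a generic $\lambda\in\mathbb{R}_{>0}^d$ chosen off a countable union of hyperplanes, bound the exponents below by $\min_i\langle u_i,\lambda\rangle+\lambda_{\min}|m|$, and conclude with Lemma~\ref{lem:dirichlet_series}; your explicit handling of the empty case $w\notin W$ and of the regrouping via absolute summability matches what the paper does more tersely.
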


\begin{proof}
By~\eqref{eq:lincomb_h_zero} the identity holds on $U$. Substituting
the absolutely convergent expansion~\eqref{eq:h_series_full} into
the finite sum in~\eqref{eq:lincomb_h_zero} and regrouping terms
by exponent vector gives 
\[
0=\sum_{i=1}^{M}b_{i}\sum_{m\in\mathbb{N}_{0}^{d}}\frac{(-1)^{|m|}(v_{i})_{|m|}}{m!}\,y^{u_{i}+m}=\sum_{w\in W}A_{w}\,y^{w},\qquad y\in U,
\]
where 
\[
W=\{u_{i}+m:\ 1\le i\le M,\ m\in\mathbb{N}_{0}^{d}\},
\]
and 
\[
A_{w}=\sum_{i=1}^{M}\;\sum_{\substack{m\in\mathbb{N}_{0}^{d}\\
u_{i}+m=w
}
}b_{i}\,\frac{(-1)^{|m|}(v_{i})_{|m|}}{m!}.
\]
For fixed $w$, there are only finitely many contributing pairs $(i,m)$,
since for each $i$ there is at most one $m$ with $u_{i}+m=w$.

Choose $\lambda\in\mathbb{R}_{>0}^{d}$ such that the scalars $\{\langle w,\lambda\rangle:w\in W\}$
are all distinct. Such a choice exists because for any two distinct
$w,w'\in W$, the constraint $\langle w,\lambda\rangle=\langle w',\lambda\rangle$
defines a hyperplane in $\mathbb{R}^{d}$, and the countable union
of these hyperplanes cannot cover $\mathbb{R}_{>0}^{d}$.

For $s\ge0$ set $y(s)=(\mathrm{e}^{-\lambda_{1}s},\ldots,\mathrm{e}^{-\lambda_{d}s})$.
Since each $\lambda_{j}>0$, we have $Y(s)=\sum_{j=1}^{d}\mathrm{e}^{-\lambda_{j}s}\to0$,
so $y(s)\in U$ for all $s\ge s_{0}$ for some $s_{0}$. Substituting
$y=y(s)$ yields 
\[
0=\sum_{w\in W}A_{w}\,y(s)^{w}=\sum_{w\in W}A_{w}\,\mathrm{e}^{-\langle w,\lambda\rangle s}\qquad\text{for all }s\ge s_{0}.
\]
This series is absolutely convergent for every $s\ge s_{0}$ because
$\sum_{w\in W}|A_{w}|\,y(s)^{w}$ is dominated by the sum of the absolute
values of the finitely many absolutely convergent series obtained from~\eqref{eq:h_series_full}
at $y=y(s)$.

It remains to verify that the exponents can be arranged into a strictly
increasing sequence diverging to $\infty$. Let 
\[
\lambda_{\min}=\min_{1\le j\le d}\lambda_{j}>0,\qquad C=\min_{1\le i\le M}\langle u_{i},\lambda\rangle.
\]
If $w=u_{i}+m$, then 
\[
\langle w,\lambda\rangle=\langle u_{i},\lambda\rangle+\langle m,\lambda\rangle\ge C+\lambda_{\min}|m|.
\]
Hence, for any fixed $M_{0}\in\mathbb{R}$, only finitely many $w\in W$
satisfy $\langle w,\lambda\rangle\le M_{0}$. Since the exponents
are also pairwise distinct by construction, they can be enumerated
as a strictly increasing sequence tending to $\infty$. Lemma~\ref{lem:dirichlet_series}
therefore applies and yields $A_{w}=0$ for every $w\in W$. Unwinding
the definition of $A_{w}$ gives~\eqref{eq:coeff_extract_full}. 
\end{proof}
The next lemma shows that a linear relation among transported kernels
splits according to the congruence class of the parameter, modulo
$\mathbb{Z}^{J}$. The coefficient-extraction lemma first separates
indices according to the first $d$ coordinates modulo integers, and
a further one-dimensional argument then separates the last coordinate.
This two-stage separation argument is reminiscent of the bookkeeping
of exponent classes in Puiseux-series methods (cf. \citealp[Sec.~4.2]{KrantzParks2002}). 
\begin{lem}
\label{lem:full_congruence} Let $\alpha^{(1)},\ldots,\alpha^{(M)}\in\mathbb{R}_{>0}^{J}$
be distinct and let $c_{1},\ldots,c_{M}\in\mathbb{R}$. Assume 
\[
\sum_{i=1}^{M}c_{i}\,h_{\alpha^{(i)}}(y)=0,\qquad y\in\mathbb{R}_{>0}^{d}.
\]
Partition $\{1,\ldots,M\}$ by the equivalence relation 
\[
i\approx i'\quad\Longleftrightarrow\quad\alpha^{(i)}-\alpha^{(i')}\in\mathbb{Z}^{J}.
\]
Then for every $\approx$-equivalence class $D$ we have 
\[
\sum_{i\in D}c_{i}\,h_{\alpha^{(i)}}(y)=0,\qquad y\in\mathbb{R}_{>0}^{d}.
\]
\end{lem}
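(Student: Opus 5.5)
The plan is a two-stage separation. The first stage splits indices by the first $d$ coordinates modulo $\mathbb{Z}^{d}$ and uses Lemma~\ref{lem:coeff_extract_full}. The second stage splits by the last coordinate modulo $\mathbb{Z}$ and uses a one-dimensional restriction to the real line together with analytic continuation.

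\textbf{Stage one.} Define a coarser relation $i\sim i'$ iff $u_{i}-u_{i'}\in\mathbb{Z}^{d}$. The exponent set $W$ from Lemma~\ref{lem:coeff_extract_full} decomposes as a disjoint union of the sets $W_{E}=\{u_{i}+m:i\in E,\ m\in\mathbb{N}_{0}^{d}\}$, one for each $\sim$-class $E$. Indeed, if $u_{i}+m=u_{i'}+m'$, then $u_{i}-u_{i'}\in\mathbb{Z}^{d}$, so $i\sim i'$. Hence each coefficient $A_{w}$ only involves indices from one class $E$. By Lemma~\ref{lem:coeff_extract_full} all these coefficients vanish. Therefore the partial sum $H_{E}(y)=\sum_{i\in E}c_{i}h_{\alpha^{(i)}}(y)$ has identically vanishing expansion on $U$, so $H_{E}=0$ on $U$.

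To extend $H_{E}=0$ from $U$ to all of $\mathbb{R}_{>0}^{d}$, use real-analyticity: each $h_{\alpha}$ is real analytic on the connected open set $\mathbb{R}_{>0}^{d}$. Alternatively, restrict to lines through $U$ and apply the one-variable identity theorem, as the introduction announces. This completes the separation by the first $d$ coordinates.

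\textbf{Stage two.} Fix a $\sim$-class $E$ and split it further according to $\alpha_{J}$ modulo $\mathbb{Z}$; since $u$ is already congruent within $E$, this is equivalent to splitting by $v_{i}=\alpha_{+}^{(i)}$ modulo $\mathbb{Z}$. The tool is the behaviour as $y\to\infty$, equivalently near $x_{J}=0$. Substitute $y=t\,y_{0}$ with a fixed direction $y_{0}\in\mathbb{R}_{>0}^{d}$, normalized so that $Y_{0}=1$, and let $t\to\infty$. Then
\[
h_{\alpha}(ty_{0})=C_{\alpha}\,y_{0}^{u}\,t^{|u|}(1+t)^{-v}=C_{\alpha}\,y_{0}^{u}\,t^{|u|-v}(1+t^{-1})^{-v}.
\]
Expanding in $s=1/t$ gives
\[
h_{\alpha}(ty_{0})=C_{\alpha}\,y_{0}^{u}\sum_{n\ge0}\binom{-v}{n}s^{\,v-|u|+n}.
\]
Here $v-|u|=\alpha_{J}+d$. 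Within $E$ the vectors $u_{i}$ differ by integers, so the exponents $\alpha_{J}^{(i)}+d+n$ collide only when $\alpha_{J}^{(i)}-\alpha_{J}^{(i')}\in\mathbb{Z}$.

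Apply the one-dimensional version of Lemma~\ref{lem:dirichlet_series}, with $s=e^{-r}$ and large $r$, to the identity $H_{E}(ty_{0})=0$. This forces the partial sum over each $\mathbb{Z}$-class $D\subseteq E$ of $\alpha_{J}$ to have vanishing coefficients. So $\sum_{i\in D}c_{i}h_{\alpha^{(i)}}(ty_{0})=0$ for large $t$. By analyticity in $t$ on $(0,\infty)$, this holds for all $t>0$. Since $y_{0}$ is an arbitrary direction, it holds on all of $\mathbb{R}_{>0}^{d}$. The $\approx$-classes are exactly these classes $D$, which proves the lemma.

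\textbf{Main obstacle.} The delicate step is the bookkeeping in stage two. For a fixed $y_{0}$, the coefficients $C_{\alpha}y_{0}^{u_{i}}$ for different $u_{i}$ in $E$ mix together, but this is harmless. The separation only needs distinct exponent classes modulo $\mathbb{Z}$ in $s$. The $\alpha$'s with the same $\alpha_{J}$ class are grouped together without being separated further, so no cancellation issue arises between classes. One also has to justify the absolute convergence needed for Lemma~\ref{lem:dirichlet_series}; this holds for $t>1$ because each expansion is a binomial series in $1/t$. Finally, the analytic continuation step should be checked: the restriction to the ray is analytic in $t>0$, and vanishing for large $t$ gives vanishing for all $t>0$.
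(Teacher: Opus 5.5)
Your proposal is correct and follows essentially the paper's proof. Both use the $\sim$-class separation via Lemma~\ref{lem:coeff_extract_full} with analytic continuation, then the radial substitution $y=x/s$ (your $y=ty_0$, $s=1/t$), a binomial expansion in $s$, and a one-dimensional uniqueness argument for generalized power series; the only cosmetic differences are that you keep the exponents $\alpha_J+d+n$ rather than factoring out $y^{u_0}$, and you continue along rays rather than from the open set $\{Y>1\}$.
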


\begin{proof}
Recall the notation from Lemma~\ref{lem:coeff_extract_full}. The
assumed identity is equivalent to 
\begin{equation}
\sum_{i=1}^{M}b_{i}\,y^{u_{i}}(1+Y)^{-v_{i}}=0,\qquad y\in\mathbb{R}_{>0}^{d}.\label{eq:full_congruence_start}
\end{equation}

First partition the index set by the weaker relation 
\[
i\sim i'\quad\Longleftrightarrow\quad u_{i}-u_{i'}\in\mathbb{Z}^{d}.
\]
Fix one $\sim$-class $C$. Choose its unique representative $u_{0}\in(-1,0]^{d}$.
Then for each $i\in C$ there is a unique $n_{i}\in\mathbb{N}_{0}^{d}$
such that 
\[
u_{i}=u_{0}+n_{i}.
\]
Consider the partial sum 
\[
F_{C}(y)=\sum_{i\in C}b_{i}\,y^{n_{i}}(1+Y)^{-v_{i}}.
\]
We claim that $F_{C}=0$ on $\mathbb{R}_{>0}^{d}$.

By Lemma~\ref{lem:coeff_extract_full}, for each exponent $w\in(-1,\infty)^{d}$
the coefficient of the generalised monomial $y^{w}$ in the expansion
is 
\[
\sum_{i=1}^{M}\;\sum_{\substack{m\in\mathbb{N}_{0}^{d}\\
u_{i}+m=w
}
}b_{i}\,\frac{(-1)^{|m|}(v_{i})_{|m|}}{m!},
\]
and Lemma~\ref{lem:coeff_extract_full} asserts that this coefficient
is equal to $0$ for every $w\in(-1,\infty)^{d}$.

Now fix $n\in\mathbb{N}_{0}^{d}$ and take $w=u_{0}+n$. Consider
a fixed index $i\in\{1,\dots,M\}$. The inner sum over $m$ contributes
only if there exists some $m\in\mathbb{N}_{0}^{d}$ such that 
\[
u_{i}+m=u_{0}+n.
\]
Rearranging gives 
\[
u_{i}-u_{0}=n-m.
\]
Since $n\in\mathbb{N}_{0}^{d}$ and $m\in\mathbb{N}_{0}^{d}$, we
have $n-m\in\mathbb{Z}^{d}$. Hence the existence of such an $m$
forces $u_{i}-u_{0}\in\mathbb{Z}^{d}$, which means that necessarily
$i\in C$. In particular, the coefficient of $y^{u_{0}+n}$ only depends
on indices $i\in C$.

For each $i\in C$, write $n_{i}=u_{i}-u_{0}\in\mathbb{Z}^{d}$, so
that $u_{i}=u_{0}+n_{i}$. Then the condition $u_{i}+m=u_{0}+n$ is
equivalent to $n_{i}+m=n$. Therefore Lemma~\ref{lem:coeff_extract_full}
yields 
\[
\sum_{i\in C}\;\sum_{\substack{m\in\mathbb{N}_{0}^{d}\\
n_{i}+m=n
}
}b_{i}\,\frac{(-1)^{|m|}(v_{i})_{|m|}}{m!}=0,\qquad n\in\mathbb{N}_{0}^{d}.
\]
But these are exactly the coefficients in the absolutely convergent
series representation of $F_{C}$ on the neighborhood $U=\{y>0:Y<1/2\}$.
Therefore every coefficient in that representation vanishes. Since the
series converges to $F_{C}(y)$ for each $y\in U$, it follows that $F_{C}(y)=0$
for every $y\in U$. Fix any $y^{*}\in U$ and any $y\in\mathbb{R}_{>0}^{d}$,
and consider 
\[
g(t)=F_{C}\bigl((1-t)y^{*}+ty\bigr),\qquad t\in[0,1].
\]
Since $\mathbb{R}_{>0}^{d}$ is convex, the whole segment lies in
$\mathbb{R}_{>0}^{d}$, and Proposition~2.2.8 of \citet{KrantzParks2002}
implies that $g$ is real-analytic on an open interval containing
$[0,1]$. Because $F_{C}=0$ on the open set $U$, we have $g(t)=0$
for all sufficiently small $t\ge0$; hence \citet[Cor.~1.2.6]{KrantzParks2002}
gives $g(t)=0$ for every $t\in[0,1]$. In particular, $F_{C}(y)=g(1)=0$.
Since $y\in\mathbb{R}_{>0}^{d}$ was arbitrary, $F_{C}=0$ on $\mathbb{R}_{>0}^{d}$.

Thus \eqref{eq:full_congruence_start} splits into a sum of identities,
one for each $\sim$-class. Fix one such class $C$ and write its
identity as 
\begin{equation}
\sum_{i\in C}b_{i}\,y^{n_{i}}(1+Y)^{-v_{i}}=0,\qquad y\in(0,\infty)^{d}.\label{eq:one_firstd_class}
\end{equation}

Now partition $C$ by the relation 
\[
i\approx i'\quad\Longleftrightarrow\quad\alpha_{J}^{(i)}-\alpha_{J}^{(i')}\in\mathbb{Z}.
\]
Because all indices in $C$ already have the same first $d$ coordinates
modulo $\mathbb{Z}$, this is equivalent to $\alpha^{(i)}-\alpha^{(i')}\in\mathbb{Z}^{J}$.

For $i\in C$ define 
\[
\lambda_{i}=v_{i}-|n_{i}|.
\]
Since $u_{i}=u_{0}+n_{i}$, we have 
\[
\lambda_{i}=\alpha_{+}^{(i)}-\sum_{j=1}^{d}n_{ij}=\alpha_{J}^{(i)}+\sum_{j=1}^{d}\bigl((u_{0})_{j}+1\bigr),
\]
so 
\[
\lambda_{i}-\lambda_{i'}=\alpha_{J}^{(i)}-\alpha_{J}^{(i')}.
\]
Hence the partition by $\approx$ is also the partition by $\lambda_{i}$
modulo $\mathbb{Z}$.

Let $\Delta_{d-1}^{\circ}=\{x\in\mathbb{R}_{>0}^{d}:x_{1}+\cdots+x_{d}=1\}$.
For $x\in\Delta_{d-1}^{\circ}$ and $s\in(0,1)$ set $y=x/s$. Then
$Y=1/s$, and \eqref{eq:one_firstd_class} becomes 
\begin{equation}
\sum_{i\in C}b_{i}\,x^{n_{i}}s^{\lambda_{i}}(1+s)^{-v_{i}}=0,\qquad x\in\Delta_{d-1}^{\circ},\ 0<s<1.\label{eq:s_identity_congruence}
\end{equation}

Fix one $\approx$-class $D\subseteq C$ and let 
\[
\lambda_{D}=\min\{\lambda_{i}:i\in D\}.
\]
Then for each $i\in D$ there exists $k_{i}\in\mathbb{N}_{0}$ such
that 
\[
\lambda_{i}=\lambda_{D}+k_{i}.
\]
For $|s|<1$ we have the convergent binomial expansion 
\[
(1+s)^{-v_{i}}=\sum_{m=0}^{\infty}\frac{(-1)^{m}(v_{i})_{m}}{m!}\,s^{m}.
\]
Substituting this into \eqref{eq:s_identity_congruence} yields, for
each fixed $x\in\Delta_{d-1}^{\circ}$, 
\[
0=\sum_{D}s^{\lambda_{D}}\sum_{n=0}^{\infty}A_{D,n}(x)s^{n},\qquad0<s<1,
\]
where 
\[
A_{D,n}(x)=\sum_{i\in D}b_{i}\,x^{n_{i}}\,\frac{(-1)^{n-k_{i}}(v_{i})_{n-k_{i}}}{(n-k_{i})!},
\]
with the convention that the summand is $0$ when $n<k_{i}$.

We now use a one-dimensional uniqueness argument. Suppose there exists
a pair $(D,n)$ with $A_{D,n}(x)\neq0$. Set
\[
E_{x}=\{\lambda_{D}+n:A_{D,n}(x)\neq0\}.
\]
Because there are only finitely many $\approx$-classes $D$, for every
$B>0$ there are only finitely many pairs $(D,n)$ satisfying $\lambda_{D}+n\le B$.
Hence $E_{x}$ is a nonempty subset of $(0,\infty)$ with a smallest element;
write $\eta=\min E_{x}$, and choose $(D_{*},n_{*})$ such that $\eta=\lambda_{D_{*}}+n_{*}$.
Because the numbers $\lambda_{D}$ are distinct modulo integers, the
representation $\eta=\lambda_{D}+n$ is unique. Multiplying the above
identity by $s^{-\eta}$ gives
\[
0=A_{D_{*},n_{*}}(x)+\sum_{(D,n):\,\lambda_{D}+n>\eta}A_{D,n}(x)s^{\lambda_{D}+n-\eta},
\qquad 0<s<1.
\]
Every exponent in the remainder sum is strictly positive, so letting
$s\downarrow0$ yields $A_{D_{*},n_{*}}(x)=0$, a contradiction. Therefore 
\[
A_{D,n}(x)=0\qquad\text{for every \ensuremath{D}, every \ensuremath{n\ge0}, and every \ensuremath{x\in\Delta_{d-1}^{\circ}}.}
\]
It follows that for each $\approx$-class $D$, 
\[
\sum_{i\in D}b_{i}\,x^{n_{i}}s^{\lambda_{i}}(1+s)^{-v_{i}}=0,\qquad x\in\Delta_{d-1}^{\circ},\ 0<s<1.
\]
Undoing the change of variables $y=x/s$, we obtain 
\[
\sum_{i\in D}b_{i}\,y^{n_{i}}(1+Y)^{-v_{i}}=0,\qquad y\in(0,\infty)^{d}\text{ with }Y>1.
\]
Thus the corresponding partial sum vanishes on the nonempty open set
$\{y>0:Y>1\}$. By the same identity theorem, it vanishes on all of
$\mathbb{R}_{>0}^{d}$. Multiplying back by the common factor $y^{u_{0}}$
shows that 
\[
\sum_{i\in D}c_{i}\,h_{\alpha^{(i)}}(y)=0,\qquad y\in(0,\infty)^{d}.
\]
Since $C$ was arbitrary, the same conclusion holds for every congruence
class. 
\end{proof}
The next lemma is the key combinatorial input. It concerns polynomial
identities on the simplex and is independent of the Dirichlet kernel
normalizing constants. The proof is elementary, but its degree-elevation
step is closely related to the Bernstein--Bézier representation and
degree-raising formula on triangles; see, e.g., \citet[Thms.~2.4
and~2.39]{LaiSchumaker2007}. 
\begin{lem}
\label{lem:simplex_poly_sign} Let $n^{(1)},\ldots,n^{(M)}\in\mathbb{N}_{0}^{J}$
be distinct and let $d_{1},\ldots,d_{M}\in\mathbb{R}$ be nonzero.
Assume 
\[
\sum_{i=1}^{M}d_{i}\,x^{n^{(i)}}=0,\qquad x\in\Delta_{J-1}^{\circ},\qquad x^{n}=\prod_{j=1}^{J}x_{j}^{n_{j}}.
\]
Then 
\[
\max\bigl(|I_{+}|,|I_{-}|\bigr)\ge J,\qquad I_{+}=\{i:d_{i}>0\},\ \ I_{-}=\{i:d_{i}<0\}.
\]
\end{lem}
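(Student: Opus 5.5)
The plan is to homogenize the identity, turn it into an equality of two polynomials whose coefficients are all nonnegative, and then count supports at the corners lying above a monomial of minimal degree.

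\emph{Homogenization.} Let $N=\max_{i}|n^{(i)}|$ and $\sigma(x)=x_{1}+\cdots+x_{J}$. Since $\sigma\equiv1$ on $\Delta_{J-1}^{\circ}$, the hypothesis gives
\[
Q(x)=\sum_{i=1}^{M}d_{i}\,x^{n^{(i)}}\sigma(x)^{N-|n^{(i)}|}=0,\qquad x\in\Delta_{J-1}^{\circ}.
\]
$Q$ is a homogeneous polynomial of degree $N$ on $\mathbb{R}^{J}$. For any $z\in\mathbb{R}_{>0}^{J}$ we have $Q(z)=\sigma(z)^{N}Q\bigl(z/\sigma(z)\bigr)=0$, so $Q$ vanishes on the open orthant. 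A polynomial vanishing on a nonempty open set is identically zero, so $Q\equiv0$ as a polynomial.

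Split the sum by the sign of $d_{i}$ and write the result as
\[
P_{+}(x)=\sum_{i\in I_{+}}d_{i}\,x^{n^{(i)}}\sigma^{N-|n^{(i)}|}=\sum_{i\in I_{-}}|d_{i}|\,x^{n^{(i)}}\sigma^{N-|n^{(i)}|}=P_{-}(x),
\]
an equality of polynomials. By the multinomial theorem,
\[
x^{n}\sigma^{k}=\sum_{|m|=k}\frac{k!}{m!}x^{n+m}.
\]
This is the degree-raising step, and every coefficient in it is strictly positive. No cancellation can therefore occur within $P_{+}$ or within $P_{-}$. Writing $S(n)=\{p\in\mathbb{N}_{0}^{J}:p\ge n,\ |p|=N\}$ with the componentwise order, the monomial support of $P_{\pm}$ is exactly $\bigcup_{i\in I_{\pm}}S(n^{(i)})$. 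Equality of the polynomials forces these two supports to coincide.

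\emph{Corner counting.} Since the $d_{i}$ are nonzero and $M\ge1$, choose $i_{0}$ with $|n^{(i_{0})}|$ minimal. By symmetry (replace $d$ by $-d$) we may assume $i_{0}\in I_{+}$, and we write $n^{0}=n^{(i_{0})}$ and $k_{0}=N-|n^{0}|$. We will show $|I_{-}|\ge J$. For each $j=1,\dots,J$ consider the corner point
\[
p_{j}=n^{0}+k_{0}e_{j}\in S(n^{0}).
\]
Each $p_{j}$ lies in the support of $P_{+}$, hence in the support of $P_{-}$. So there is some $i_{j}\in I_{-}$ with $n^{(i_{j})}\le p_{j}$. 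This means $n^{(i_{j})}_{\ell}\le n^{0}_{\ell}$ for every $\ell\ne j$.

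Now $i_{j}\neq i_{0}$, so by distinctness $n^{(i_{j})}\ne n^{0}$, and by minimality $|n^{(i_{j})}|\ge|n^{0}|$. Together with the coordinatewise bounds for $\ell\ne j$, these force $n^{(i_{j})}_{j}>n^{0}_{j}$. For $j'\ne j$, the inequality $n^{(i_{j'})}\le p_{j'}$ instead gives $n^{(i_{j'})}_{j}\le n^{0}_{j}$. Hence $i_{j}\ne i_{j'}$. The map $j\mapsto i_{j}$ is therefore injective from $\{1,\dots,J\}$ into $I_{-}$, so $|I_{-}|\ge J$ and $\max(|I_{+}|,|I_{-}|)\ge J$.

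The main obstacle is setting up the corner argument correctly. One must pick $n^{0}$ of minimal total degree, not merely a componentwise-minimal element, so that the inequality $|n^{(i_{j})}|\ge|n^{0}|$ is available. That inequality is what pushes the excess of $n^{(i_{j})}$ into coordinate $j$ and gives injectivity. One must also note that positivity of the degree-raising coefficients is what allows support equality to replace coefficient matching. The homogenization step and the passage from vanishing on the simplex to vanishing as a polynomial are routine.
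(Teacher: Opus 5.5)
Your proof is correct and follows essentially the same route as the paper: degree elevation to level $N$, coefficient matching, a minimal-degree exponent $n^{0}$, and the corner exponents $n^{0}+(N-|n^{0}|)e_{j}$. Each corner forces a distinct index of the opposite sign, because an index serving two corners would satisfy $n^{(i)}\le n^{0}$. The differences are minor: you obtain vanishing of the degree-$N$ coefficients from homogeneity and the identity theorem for polynomials rather than from Corollary~\ref{cor:fixedA_linind}, and your injectivity argument (the excess of $n^{(i_j)}$ over $n^{0}$ sits exactly in coordinate $j$) makes the paper's separate check that $|n_{*}|<N$ unnecessary.
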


\begin{proof}
Let 
\[
N=\max_{1\le i\le M}|n^{(i)}|,\qquad|n|=n_{1}+\cdots+n_{J}.
\]
Because $x_{1}+\cdots+x_{J}=1$ on the simplex, each monomial may
be degree-elevated to level $N$: 
\[
x^{n^{(i)}}=x^{n^{(i)}}(x_{1}+\cdots+x_{J})^{N-|n^{(i)}|}=\sum_{\substack{\beta\in\mathbb{N}_{0}^{J}\\
|\beta|=N-|n^{(i)}|
}
}\binom{N-|n^{(i)}|}{\beta}\,x^{n^{(i)}+\beta}.
\]
Hence the assumed identity becomes 
\[
0=\sum_{|u|=N}A_{u}\,x^{u},\qquad A_{u}=\sum_{i=1}^{M}d_{i}\,\binom{N-|n^{(i)}|}{u-n^{(i)}},
\]
where the multinomial coefficient is interpreted as $0$ if $u\not\ge n^{(i)}$
coordinatewise.

For each $u\in\mathbb{N}_{0}^{J}$ with $|u|=N$, the kernel $f_{u+\mathbf{1}}$
is a positive constant multiple of $x^{u}$, and all parameters $u+\mathbf{1}$
lie in the fixed-total slice $\Theta_{N+J}=\{\alpha:\alpha_{+}=N+J\}$.
By Corollary~\ref{cor:fixedA_linind}, these kernels are linearly
independent. Therefore the coefficients $A_{u}$ must vanish individually:
\begin{equation}
A_{u}=0\qquad\text{for every }u\in\mathbb{N}_{0}^{J}\text{ with }|u|=N.\label{eq:Au_zero}
\end{equation}

If all exponents $n^{(i)}$ had total degree $N$, then each function
$x^{n^{(i)}}$ would be a positive constant multiple of the Dirichlet
kernel $f_{n^{(i)}+\mathbf{1}}$, and all parameters $n^{(i)}+\mathbf{1}$
would lie in the same fixed-total slice $\Theta_{N+J}$. Corollary~\ref{cor:fixedA_linind}
would then force all coefficients $d_{i}$ to vanish, contrary to the
assumption of a nontrivial identity. Hence at least one exponent has
total degree strictly less than $N$.

Choose $i_{*}$ so that 
\[
|n_{*}|=\min_{1\le i\le M}|n^{(i)}|,\qquad n_{*}=n^{(i_{*})}.
\]
Then $|n_{*}|<N$. Set $\sigma=\operatorname{sgn}(d_{i_{*}})\in\{+1,-1\}$.

For each $j\in\{1,\ldots,J\}$ define the degree-$N$ exponent 
\[
\nu^{(j)}=n_{*}+(N-|n_{*}|)e_{j}.
\]
Because $N-|n_{*}|>0$, the vectors $\nu^{(1)},\dots,\nu^{(J)}$ are
distinct and satisfy $|\nu^{(j)}|=N$. Hence \eqref{eq:Au_zero} gives,
for each $j$, 
\begin{equation}
0=A_{\nu^{(j)}}=d_{i_{*}}+\sum_{i\ne i_{*}}d_{i}\,\binom{N-|n^{(i)}|}{\nu^{(j)}-n^{(i)}}.\label{eq:vertex_coeff_eq}
\end{equation}
The coefficient of $d_{i_{*}}$ is $1$ because $\nu^{(j)}-n_{*}=(N-|n_{*}|)e_{j}$.
Moreover every multinomial coefficient in \eqref{eq:vertex_coeff_eq}
is nonnegative. Hence terms with sign $\sigma$ contribute with sign
$\sigma$, and for the sum to vanish there must be at least one index
of sign $-\sigma$ contributing to each equation \eqref{eq:vertex_coeff_eq}.

We claim that any fixed index $i\ne i_{*}$ can contribute to at most
one of the $J$ equations \eqref{eq:vertex_coeff_eq}. Indeed, if
$n^{(i)}\le\nu^{(j)}$ and $n^{(i)}\le\nu^{(k)}$ for two distinct
indices $j\ne k$, then for every coordinate $\ell\notin\{j,k\}$
we have 
\[
n_{\ell}^{(i)}\le\nu_{\ell}^{(j)}=n_{*,\ell}.
\]
For $\ell=j$, the inequality $n^{(i)}\le\nu^{(k)}$ gives 
\[
n_{j}^{(i)}\le\nu_{j}^{(k)}=n_{*,j},
\]
and similarly $n_{k}^{(i)}\le n_{*,k}$. Thus $n^{(i)}\le n_{*}$
coordinatewise. Since all coordinates are nonnegative integers and
$n^{(i)}\neq n_{*}$, this would imply $|n^{(i)}|<|n_{*}|$, contradicting
the minimal choice of $|n_{*}|$. The claim follows.

Therefore the $J$ distinct identities \eqref{eq:vertex_coeff_eq}
require at least $J$ distinct indices of sign $-\sigma$. Hence either
$|I_{+}|\ge J$ or $|I_{-}|\ge J$, i.e. 
\[
\max\bigl(|I_{+}|,|I_{-}|\bigr)\ge J.
\]
\end{proof}
We can now transfer the preceding combinatorial statement back to
Dirichlet kernels. 
\begin{lem}
\label{lem:sign_lower_bound} Let $\alpha^{(1)},\ldots,\alpha^{(M)}\in(0,\infty)^{J}$
be distinct and let $c_{1},\ldots,c_{M}\in\mathbb{R}$ not all zero.
If 
\[
\sum_{i=1}^{M}c_{i}\,f_{\alpha^{(i)}}(x)=0,\qquad x\in\Delta_{J-1}^{\circ},
\]
then 
\[
\max\left(|I_{+}|,|I_{-}|\right)\ge J,\qquad I_{+}=\{i:c_{i}>0\},\ I_{-}=\{i:c_{i}<0\}.
\]
\end{lem}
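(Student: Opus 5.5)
The plan is to reduce the identity to a single $\approx$-congruence class and then apply Lemma~\ref{lem:simplex_poly_sign} inside that class.

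First, transport the identity to the orthant. By \eqref{eq:f_vs_h}, the relation $\sum_i c_i f_{\alpha^{(i)}}=0$ on $\Delta_{J-1}^{\circ}$ is equivalent to $\sum_i c_i h_{\alpha^{(i)}}=0$ on $\mathbb{R}_{>0}^{d}$, and the coefficients and their signs are unchanged.

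Next, apply Lemma~\ref{lem:full_congruence}. Every $\approx$-class $D$ (parameters congruent modulo $\mathbb{Z}^{J}$) then satisfies its own identity $\sum_{i\in D}c_i h_{\alpha^{(i)}}=0$. Transporting back, this gives $\sum_{i\in D}c_i f_{\alpha^{(i)}}=0$ on the simplex. Since the $c_i$ are not all zero, pick a class $D$ containing an index with $c_i\neq0$, and discard the indices in $D$ with $c_i=0$. It suffices to prove the bound for $D$, because $|I_\pm|$ for $D$ is at most $|I_\pm|$ overall.

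Within $D$, all parameters have the form $\alpha^{(i)}=\alpha^{0}+n^{(i)}$ with $n^{(i)}\in\mathbb{N}_0^{J}$ distinct. To arrange this, take $\alpha^{0}_j=\min_{i\in D}\alpha^{(i)}_j$ coordinatewise; since the class differences are integer vectors, every $\alpha^{(i)}-\alpha^{0}$ lies in $\mathbb{N}_0^{J}$. Then
\[
f_{\alpha^{(i)}}(x)=C(\alpha^{(i)})\,x^{\alpha^{0}-\mathbf 1}\,x^{n^{(i)}},
\]
where $C(\alpha)=\Gamma(\alpha_+)/\prod_j\Gamma(\alpha_j)>0$. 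Divide by the strictly positive common factor $x^{\alpha^0-\mathbf 1}$ on $\Delta_{J-1}^{\circ}$. This yields $\sum_{i\in D}d_i x^{n^{(i)}}=0$ with $d_i=c_iC(\alpha^{(i)})$, which is nonzero and has the same sign as $c_i$. The exponents $n^{(i)}$ are distinct because the $\alpha^{(i)}$ are. Lemma~\ref{lem:simplex_poly_sign} then gives at least $J$ coefficients of one sign among the $d_i$, hence among the $c_i$.

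The substantive work has already been done in Lemmas~\ref{lem:full_congruence} and~\ref{lem:simplex_poly_sign}. The only points needing care are these:
\begin{itemize}
\item one must check that the chosen class $D$ still has a nontrivial relation after dropping the zero coefficients, since Lemma~\ref{lem:simplex_poly_sign} requires all $d_i\neq0$ (it does, because $D$ was chosen to contain a nonzero coefficient);
\item one must check that the shift $\alpha^{0}$ produces nonnegative integer exponents;
\item the hypothesis holds for every $x$, so no a.e.\ issues arise; in an a.e.\ version, continuity would upgrade it to everywhere.
\end{itemize}
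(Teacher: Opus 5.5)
Your proposal is correct and follows the paper's proof essentially step for step: you transport to the orthant, split by congruence class modulo $\mathbb{Z}^{J}$ via Lemma~\ref{lem:full_congruence}, factor out a common positive power within a class containing a nonzero coefficient, and apply Lemma~\ref{lem:simplex_poly_sign} with signs preserved. The only difference is cosmetic: you take the coordinatewise minimum $\alpha^{0}$ as base point, whereas the paper uses the unique representative $r\in(0,1]^{J}$; both yield distinct exponents in $\mathbb{N}_0^{J}$ and a strictly positive common factor, so either choice works.
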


\begin{proof}
Using \eqref{eq:f_vs_h}, the identity among Dirichlet kernels is
equivalent to 
\[
\sum_{i=1}^{M}c_{i}\,h_{\alpha^{(i)}}(y)=0,\qquad y\in\mathbb{R}_{>0}^{d}.
\]
By Lemma~\ref{lem:full_congruence}, this identity splits according
to the congruence class of the parameter modulo $\mathbb{Z}^{J}$.
Since not all $c_{i}$ vanish, there exists at least one congruence
class $D$ for which 
\begin{equation}
\sum_{i\in D}c_{i}\,h_{\alpha^{(i)}}(y)=0,\qquad y\in\mathbb{R}_{>0}^{d},\label{eq:one_full_congruence_class}
\end{equation}
and at least one coefficient in this sum is nonzero.

Fix such a class $D$. By \eqref{eq:f_vs_h}, for $x=x(y)$ we have
\[
\sum_{i\in D}c_{i}f_{\alpha^{(i)}}(x(y))=x_{J}(y)^{-J}\sum_{i\in D}c_{i}h_{\alpha^{(i)}}(y)=0.
\]
Since $y\mapsto x(y)$ is a bijection from $\mathbb{R}_{>0}^{d}$
onto $\Delta_{J-1}^{\circ}$, it follows that 
\[
\sum_{i\in D}c_{i}f_{\alpha^{(i)}}(x)=0,\qquad x\in\Delta_{J-1}^{\circ}.
\]

Because all parameters in $D$ are congruent modulo $\mathbb{Z}^{J}$,
there exists a unique vector $r=(r_{1},\ldots,r_{J})\in(0,1]^{J}$
and distinct vectors $n^{(i)}\in\mathbb{N}_{0}^{J}$ ($i\in D$) such
that 
\[
\alpha^{(i)}=r+n^{(i)},\qquad i\in D.
\]
Then 
\[
f_{\alpha^{(i)}}(x)=\frac{\Gamma(r_{+}+|n^{(i)}|)}{\prod_{j=1}^{J}\Gamma(r_{j}+n_{j}^{(i)})}\,x^{r-\mathbf{1}}\,x^{n^{(i)}},\qquad x^{r-\mathbf{1}}=\prod_{j=1}^{J}x_{j}^{r_{j}-1}.
\]
Hence the last displayed identity is equivalent to 
\[
\sum_{i\in D}d_{i}\,x^{n^{(i)}}=0,\qquad x\in\Delta_{J-1}^{\circ},
\]
where 
\[
d_{i}=c_{i}\,\frac{\Gamma(r_{+}+|n^{(i)}|)}{\prod_{j=1}^{J}\Gamma(r_{j}+n_{j}^{(i)})}.
\]
All prefactors are strictly positive, so 
\[
\operatorname{sgn}(d_{i})=\operatorname{sgn}(c_{i}),\qquad i\in D.
\]
Discard any indices with $d_{i}=0$, which does not change the identity
or the sign sets. Applying Lemma~\ref{lem:simplex_poly_sign} to
the remaining polynomial identity yields 
\[
\max\bigl(|\{i\in D:d_{i}>0\}|,\ |\{i\in D:d_{i}<0\}|\bigr)\ge J.
\]
Since signs are preserved from $d_{i}$ to $c_{i}$, the same bound
holds for the original coefficients $c_{i}$: 
\[
\max\left(|I_{+}|,|I_{-}|\right)\ge J.
\]
\end{proof}
The sign lower bound immediately yields the promised identifiability
result for mixtures with fewer than $J$ atoms. 
\begin{thm}
\label{thm:K_less_than_J_ident} For each integer $K<J$, let 
\[
\mathcal{F}_{K}=\left\{ m_{G}:G\text{ is a finite discrete mixing measure on }\mathbb{R}_{>0}^{J}\text{ with at most }K\text{ atoms}\right\} .
\]
Then $\mathcal{F}_{K}$ is identifiable. In particular, $\mathcal{F}_{J-1}$
is identifiable. 
\end{thm}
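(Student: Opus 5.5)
We derive the statement directly from the sign lower bound in Lemma~\ref{lem:sign_lower_bound}. Let $G=\sum_{k=1}^{K_1}\pi_k\delta_{\alpha^{(k)}}$ and $G'=\sum_{\ell=1}^{K_2}\rho_\ell\delta_{\beta^{(\ell)}}$ be finite mixing measures on $\mathbb{R}_{>0}^J$ with $K_1,K_2\le K<J$, pairwise distinct atoms within each measure, and $m_G=m_{G'}$ almost everywhere on $\Delta_{J-1}^{\circ}$. The difference $m_G-m_{G'}$ is a finite combination of continuous kernels. By the continuity argument used in the proof of Theorem~\ref{thm:fixedA}, the identity therefore holds for every $x\in\Delta_{J-1}^{\circ}$.

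Next, as in the proof of Lemma~\ref{lem:linind_ident}, enumerate the distinct atoms $\gamma^{(1)},\dots,\gamma^{(M)}$ of $G$ and $G'$ combined. Set $c_i=G(\{\gamma^{(i)}\})-G'(\{\gamma^{(i)}\})$, so that
\[
\sum_{i=1}^{M}c_i f_{\gamma^{(i)}}(x)=0\qquad\text{for all }x\in\Delta_{J-1}^{\circ}.
\]
The key bookkeeping step is to bound the sign sets. If $c_i>0$, then $G(\{\gamma^{(i)}\})>0$, so $\gamma^{(i)}$ is an atom of $G$; hence $|I_+|\le K_1\le K\le J-1$. Symmetrically, $c_i<0$ forces $\gamma^{(i)}$ to be an atom of $G'$, so $|I_-|\le K_2\le J-1$. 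Thus $\max(|I_+|,|I_-|)\le J-1<J$.

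Now argue by contradiction. If some $c_i$ were nonzero, Lemma~\ref{lem:sign_lower_bound} would give $\max(|I_+|,|I_-|)\ge J$, which contradicts the bound just obtained. Hence all $c_i=0$, so $G$ and $G'$ assign equal mass to every point of the common finite support, and $G=G'$. The case $K=J-1$ gives the identifiability of $\mathcal{F}_{J-1}$. Since the substantive work is already contained in Lemma~\ref{lem:sign_lower_bound}, the only point requiring care is the observation that each positive coefficient comes from an atom of $G$ and each negative coefficient from an atom of $G'$; shared atoms can only reduce the sign counts.
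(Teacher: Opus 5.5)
Your proposal is correct and follows the paper's proof essentially step for step. You upgrade the a.e.\ identity by continuity, merge atoms into singleton-mass differences $c_i$, charge positive (negative) coefficients to atoms of $G$ ($G'$) to get $\max(|I_+|,|I_-|)\le K<J$, and reach a contradiction with Lemma~\ref{lem:sign_lower_bound}. The paper also discards zero coefficients first, but that step is not needed, since the lemma only requires the $c_i$ to be not all zero.
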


\begin{proof}
Write 
\[
G=\sum_{k=1}^{K_{1}}\pi_{k}\delta_{\alpha^{(k)}},\qquad G'=\sum_{\ell=1}^{K_{2}}\rho_{\ell}\delta_{\beta^{(\ell)}},
\]
with $K_{1},K_{2}\le K$, $\pi_{k}>0$, $\rho_{\ell}>0$, and distinct
atoms in each measure. As in the previous proof, continuity upgrades
equality almost everywhere to equality for every $x\in\Delta_{J-1}^{\circ}$:
\[
m_{G}(x)=m_{G'}(x),\qquad x\in\Delta_{J-1}^{\circ}.
\]
Bringing all terms to one side and collecting equal atoms gives a
representation 
\[
\sum_{i=1}^{M}c_{i}f_{\gamma^{(i)}}(x)=0,\qquad x\in\Delta_{J-1}^{\circ},
\]
where the $\gamma^{(i)}$ are distinct and the coefficients $c_{i}$
are the singleton-mass differences 
\[
c_{i}=G(\{\gamma^{(i)}\})-G'(\{\gamma^{(i)}\}).
\]
Discard any zero coefficients. If all coefficients vanish then $G=G'$
and there is nothing to prove, so suppose we are in the nontrivial
case.

Define 
\[
I_{+}=\{i:c_{i}>0\},\qquad I_{-}=\{i:c_{i}<0\}.
\]
If $i\in I_{+}$ then $\gamma^{(i)}$ is an atom of $G$, so $|I_{+}|\le K_{1}\le K$.
Similarly $|I_{-}|\le K_{2}\le K$. Hence 
\[
\max(|I_{+}|,|I_{-}|)\le K<J.
\]
But Lemma~\ref{lem:sign_lower_bound} applies to the nontrivial relation
above and yields 
\[
\max(|I_{+}|,|I_{-}|)\ge J,
\]
a contradiction. Therefore the nontrivial case is impossible, so all
coefficients must in fact be zero. Equivalently, 
\[
G(\{\gamma^{(i)}\})=G'(\{\gamma^{(i)}\})\qquad\text{for all }i.
\]
Since both measures are finite and supported on the same finite set
of atoms, this implies $G=G'$. 
\end{proof}

\section{Implications}

\label{sec:Implications}

Several commonly used models are obtained from the Dirichlet family
either by embedding it as a special case or by integrating the Dirichlet
density against a fixed kernel. The non-identifiability results of the
previous sections therefore propagate directly to those settings.

Following \citet{ConnorMosimann1969} and \citet{wong2010gd}, the
generalized Dirichlet density on $\Delta_{J-1}^{\circ}$ with parameters
$a_{1},\dots,a_{J-1}>0$ and $b_{1},\dots,b_{J-1}>0$ is 
\[
g_{a,b}(x)=\prod_{j=1}^{J-1}\frac{\Gamma(a_{j}+b_{j})}{\Gamma(a_{j})\Gamma(b_{j})}x_{j}^{a_{j}-1}\left(1-\sum_{\ell=1}^{j}x_{\ell}\right)^{\gamma_{j}},
\]
where 
\[
\gamma_{j}=b_{j}-a_{j+1}-b_{j+1}\qquad(1\le j\le J-2),\qquad\gamma_{J-1}=b_{J-1}-1.
\]

\begin{prop}
\label{prop:GD_nonid} The class of unrestricted finite mixtures of
generalized Dirichlet distributions on $\Delta_{J-1}^{\circ}$ is
not identifiable. 
\end{prop}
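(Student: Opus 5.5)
The plan is to embed the Dirichlet family into the generalized Dirichlet family and then transfer Corollary~\ref{cor:nonid}. First we show that every Dirichlet density $f_{\alpha}$ is a generalized Dirichlet density, with an explicit choice of $(a,b)$. Take
\[
a_{j}=\alpha_{j}\ (1\le j\le J-1),\qquad b_{j}=\sum_{\ell=j+1}^{J}\alpha_{\ell}\ (1\le j\le J-1).
\]
With these parameters, $\gamma_{j}=b_{j}-a_{j+1}-b_{j+1}=0$ for $1\le j\le J-2$, and $\gamma_{J-1}=b_{J-1}-1=\alpha_{J}-1$. Since $1-\sum_{\ell\le J-1}x_{\ell}=x_{J}$, the kernel of $g_{a,b}$ reduces to $\prod_{j=1}^{J}x_{j}^{\alpha_{j}-1}$.

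The normalizing constants also match, because the product telescopes. We have $a_{j}+b_{j}=\sum_{\ell\ge j}\alpha_{\ell}$ and $b_{j}=a_{j+1}+b_{j+1}$, so
\[
\prod_{j=1}^{J-1}\frac{\Gamma(\sum_{\ell\ge j}\alpha_{\ell})}{\Gamma(\alpha_{j})\Gamma(\sum_{\ell>j}\alpha_{\ell})}=\frac{\Gamma(\alpha_{+})}{\prod_{j=1}^{J}\Gamma(\alpha_{j})}.
\]
An even simpler route avoids the constants altogether: both sides are probability densities with the same kernel, so they must coincide. Either way, $g_{a(\alpha),b(\alpha)}=f_{\alpha}$ on $\Delta_{J-1}^{\circ}$.

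The map $\alpha\mapsto(a(\alpha),b(\alpha))$ is injective, since $\alpha_{J}=b_{J-1}$ and $\alpha_{j}=a_{j}$ for $j<J$. Hence distinct Dirichlet atoms give distinct generalized Dirichlet atoms. Applying Theorem~\ref{thm:shift}, the one-atom measure $\delta_{(a(\alpha),b(\alpha))}$ and the $J$-atom measure $\sum_{j}\frac{\alpha_{j}}{\alpha_{+}}\delta_{(a(\alpha+e_{j}),b(\alpha+e_{j}))}$ are distinct mixing measures on the generalized Dirichlet parameter space. They induce the same mixture density, namely $f_{\alpha}$. This proves non-identifiability.

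The only step needing care is the embedding computation, in particular verifying $\gamma_{j}=0$ and the boundary case $\gamma_{J-1}$. For $J=2$ the product has the single factor $j=1$, and the statement reduces to the beta case.
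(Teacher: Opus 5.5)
Your proposal is correct and follows the paper's proof essentially verbatim. It uses the same parameter choice $a_j=\alpha_j$, $b_j=\sum_{\ell>j}\alpha_\ell$, the same checks $\gamma_j=0$ and $\gamma_{J-1}=\alpha_J-1$, and the same telescoping of the normalizing constants, followed by transfer of Corollary~\ref{cor:nonid}. Your explicit check that $\alpha\mapsto(a(\alpha),b(\alpha))$ is injective is a detail the paper leaves implicit, and it is worth keeping: it guarantees that the two Dirichlet mixing measures correspond to two distinct generalized Dirichlet mixing measures.
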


\begin{proof}
Fix $\alpha\in\mathbb{R}_{>0}^{J}$. Set 
\[
a_{j}=\alpha_{j}\qquad\text{and}\qquad b_{j}=\sum_{\ell=j+1}^{J}\alpha_{\ell},\qquad j=1,\dots,J-1.
\]
Then for $j=1,\dots,J-2$, 
\[
\gamma_{j}=b_{j}-a_{j+1}-b_{j+1}=\sum_{\ell=j+1}^{J}\alpha_{\ell}-\alpha_{j+1}-\sum_{\ell=j+2}^{J}\alpha_{\ell}=0,
\]
while 
\[
\gamma_{J-1}=b_{J-1}-1=\alpha_{J}-1.
\]
Moreover, the normalizing constant telescopes: 
\[
\prod_{j=1}^{J-1}\frac{\Gamma(a_{j}+b_{j})}{\Gamma(a_{j})\Gamma(b_{j})}=\prod_{j=1}^{J-1}\frac{\Gamma(\sum_{\ell=j}^{J}\alpha_{\ell})}{\Gamma(\alpha_{j})\Gamma(\sum_{\ell=j+1}^{J}\alpha_{\ell})}=\frac{\Gamma(\alpha_{+})}{\prod_{j=1}^{J}\Gamma(\alpha_{j})}.
\]
Hence 
\[
g_{a,b}(x)=\frac{\Gamma(\alpha_{+})}{\prod_{j=1}^{J}\Gamma(\alpha_{j})}\prod_{j=1}^{J-1}x_{j}^{\alpha_{j}-1}x_{J}^{\alpha_{J}-1}=f_{\alpha}(x).
\]
Thus every Dirichlet density is also a generalized Dirichlet density.
Therefore every finite Dirichlet mixture is also a finite generalized
Dirichlet mixture. If unrestricted finite generalized Dirichlet mixtures
were identifiable, then Corollary~\ref{cor:nonid} would be impossible. 
\end{proof}
Dirichlet-multinomial models arise by integrating multinomial sampling
against a Dirichlet prior; see \citet{bouguila2008gdm} and \citet{holmes2012dmm}.
The shift identity from Theorem~\ref{thm:shift} therefore survives
this marginalization step. 
\begin{prop}
\label{prop:DM_nonid} Fix $n\in\mathbb{N}$ and write 
\[
\mathcal{X}_{n}=\{x\in\mathbb{N}_{0}^{J}:x_{1}+\cdots+x_{J}=n\}.
\]
For $\alpha\in\mathbb{R}_{>0}^{J}$, let 
\[
p_{n,\alpha}(x)=\int_{\Delta_{J-1}^{\circ}}\frac{n!}{x_{1}!\cdots x_{J}!}\prod_{j=1}^{J}p_{j}^{x_{j}}f_{\alpha}(p)\,dp,\qquad x\in\mathcal{X}_{n},
\]
be the Dirichlet-multinomial kernel. Then 
\[
p_{n,\alpha}(x)=\sum_{j=1}^{J}\frac{\alpha_{j}}{\alpha_{+}}\,p_{n,\alpha+e_{j}}(x),\qquad x\in\mathcal{X}_{n}.
\]
Consequently, the class of unrestricted finite Dirichlet-multinomial
mixtures with fixed total count $n$ is not identifiable. 
\end{prop}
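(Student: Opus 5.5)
The plan is to push the pointwise shift identity of Theorem~\ref{thm:shift} through the integral that defines $p_{n,\alpha}$, using only linearity of the integral. Fix $x\in\mathcal{X}_{n}$ and write the multinomial kernel as $q_{x}(p)=\frac{n!}{x_{1}!\cdots x_{J}!}\prod_{j=1}^{J}p_{j}^{x_{j}}$. This function is nonnegative and bounded by $n!/(x_{1}!\cdots x_{J}!)$ on $\Delta_{J-1}^{\circ}$, so $q_{x}f_{\alpha}$ is integrable for every $\alpha\in\mathbb{R}_{>0}^{J}$. Theorem~\ref{thm:shift} gives $f_{\alpha}(p)=\sum_{j=1}^{J}(\alpha_{j}/\alpha_{+})f_{\alpha+e_{j}}(p)$ for every $p$ in the simplex. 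Multiplying by $q_{x}(p)$ and integrating a finite sum of integrable functions term by term then gives
\[
p_{n,\alpha}(x)=\sum_{j=1}^{J}\frac{\alpha_{j}}{\alpha_{+}}\,p_{n,\alpha+e_{j}}(x).
\]

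For the non-identifiability claim, I would mirror Corollary~\ref{cor:nonid}. Define the one-atom measure $G_{0}=\delta_{\alpha}$ and the $J$-atom measure $G_{1}=\sum_{j}(\alpha_{j}/\alpha_{+})\delta_{\alpha+e_{j}}$. The atoms $\alpha+e_{j}$ are pairwise distinct and none equals $\alpha$, and the weights are positive and sum to one. Hence $G_{0}\neq G_{1}$ as measures, while the identity above shows that the induced Dirichlet-multinomial mixtures agree at every $x\in\mathcal{X}_{n}$. Identifiability here means that equality of the mixture probability mass functions on $\mathcal{X}_{n}$ forces equality of the mixing measures, and this example violates it.

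There is no real obstacle. The only points to check are that the integrals are finite, so that linearity applies, and that the two mixing measures genuinely differ. Both are immediate, since $J\ge2$ gives at least two distinct atoms in $G_{1}$. Alternatively, one could evaluate $p_{n,\alpha}$ in closed form using gamma functions and verify the identity algebraically. The integral argument avoids that computation.
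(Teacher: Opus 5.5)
Your proposal is correct and follows the paper's proof: you multiply the shift identity of Theorem~\ref{thm:shift} by the multinomial kernel, integrate term by term, and then reuse the $\delta_{\alpha}$ versus $\sum_{j}(\alpha_{j}/\alpha_{+})\delta_{\alpha+e_{j}}$ pair from Corollary~\ref{cor:nonid}. Your explicit integrability check, via the bound on the kernel, is a small addition that the paper leaves implicit.
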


\begin{proof}
The displayed identity is obtained by multiplying the shift identity
of Theorem~\ref{thm:shift} by the multinomial kernel $\tfrac{n!}{x_{1}!\cdots x_{J}!}\prod_{j=1}^{J}p_{j}^{x_{j}}$
and integrating over $\Delta_{J-1}^{\circ}$. The non-identifiability
conclusion then follows exactly as in Corollary~\ref{cor:nonid},
using the one-atom measure $\delta_{\alpha}$ and the $J$-atom measure
$\sum_{j=1}^{J}(\alpha_{j}/\alpha_{+})\,\delta_{\alpha+e_{j}}$. If
the totals are observed and conditioned upon, the same identity holds
for each fixed total $n$, so the same argument applies pointwise
in $n$. 
\end{proof}
For latent Dirichlet allocation, \citet{blei2003lda} consider a vocabulary
of size $V$, a topic matrix $\beta=(\beta_{kv})\in[0,1]^{K\times V}$
with $\sum_{v=1}^{V}\beta_{kv}=1$ for each $k$, and documents $w=(w_{1},\dots,w_{N})\in\{1,\dots,V\}^{N}$.
For fixed $\beta$, the marginal likelihood of a document $w$ under
the Dirichlet prior $\mathrm{Dir}(\alpha)$ can be written as 
\[
q_{\alpha,\beta}(w)=\int_{\Delta_{K-1}^{\circ}}\prod_{n=1}^{N}\Bigl(\sum_{k=1}^{K}\theta_{k}\beta_{k,w_{n}}\Bigr)f_{\alpha}(\theta)\,d\theta.
\]
Thus the same integration argument yields the following analogue of
Proposition~\ref{prop:DM_nonid}. 
\begin{prop}
\label{prop:LDA_nonid} Fix a topic matrix $\beta$. Then for every
$\alpha\in\mathbb{R}_{>0}^{K}$ and every document $w$, 
\[
q_{\alpha,\beta}(w)=\sum_{k=1}^{K}\frac{\alpha_{k}}{\alpha_{+}}\,q_{\alpha+e_{k},\beta}(w).
\]
Consequently, the unrestricted finite mixture class generated by the LDA marginal
kernels $\{q_{\alpha,\beta}:\alpha\in\mathbb{R}_{>0}^{K}\}$ is not
identifiable. More generally, the same conclusion holds for any parameter
set $\Theta\subseteq\mathbb{R}_{>0}^{K}$ that contains some $\alpha$ together
with all shifts $\alpha+e_{k}$, $k=1,\dots,K$. 
\end{prop}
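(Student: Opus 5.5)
The plan is to reproduce, at the level of marginal likelihoods, the mechanism already used for the Dirichlet-multinomial kernel in Proposition~\ref{prop:DM_nonid}. The identity will follow by multiplying the shift identity of Theorem~\ref{thm:shift} (with $J$ replaced by $K$, applied on $\Delta_{K-1}^{\circ}$) by the document likelihood and integrating. Non-identifiability will then follow exactly as in Corollary~\ref{cor:nonid}.

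Fix $\beta$, a document $w$, and $\alpha\in\mathbb{R}_{>0}^{K}$, and set
\[
L_{w}(\theta)=\prod_{n=1}^{N}\Bigl(\sum_{k=1}^{K}\theta_{k}\beta_{k,w_{n}}\Bigr).
\]
Since $0\le\beta_{kv}\le1$ and $\sum_{k}\theta_{k}=1$, we have $0\le L_{w}(\theta)\le1$ on $\Delta_{K-1}^{\circ}$. Each $q_{\alpha,\beta}(w)$ is therefore a finite integral of a bounded function against a probability density. Theorem~\ref{thm:shift} gives the pointwise identity $f_{\alpha}(\theta)=\sum_{k=1}^{K}(\alpha_{k}/\alpha_{+})f_{\alpha+e_{k}}(\theta)$ for every $\theta\in\Delta_{K-1}^{\circ}$. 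Multiplying by $L_{w}(\theta)$ and integrating, linearity of the integral over a finite sum of integrable nonnegative functions gives
\[
q_{\alpha,\beta}(w)=\sum_{k=1}^{K}\frac{\alpha_{k}}{\alpha_{+}}q_{\alpha+e_{k},\beta}(w).
\]
No interchange of limits is needed, since the sum has only $K$ terms.

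For non-identifiability, take $G_{0}=\delta_{\alpha}$ and $G_{1}=\sum_{k}(\alpha_{k}/\alpha_{+})\delta_{\alpha+e_{k}}$. The weights of $G_{1}$ are positive and sum to one. Its atoms $\alpha+e_{k}$ are pairwise distinct and distinct from $\alpha$. Hence $G_{0}\neq G_{1}$ as measures, while the induced mixtures of the kernels $q_{\cdot,\beta}$ agree at every document $w$. This holds for any $\Theta$ containing $\alpha$ and all $\alpha+e_{k}$, which gives the more general claim. One caveat: for $K\ge2$ the shifted atoms are genuinely different mixing measures. The kernels $q_{\alpha+e_{k},\beta}$ themselves might coincide for degenerate $\beta$, for instance identical rows, but identifiability is defined at the level of mixing measures, so this does not matter.

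The main point to check is only that the paper's notion of identifiability, stated for Dirichlet mixtures, is being applied to the analogous class generated by $q_{\cdot,\beta}$. Equality of the mixtures is pointwise on the discrete document space, so no ``almost everywhere'' subtlety arises. Beyond that, the argument is routine.
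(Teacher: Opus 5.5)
Your proposal is correct and matches the paper's proof: you integrate the shift identity of Theorem~\ref{thm:shift} against the document likelihood, then compare $\delta_{\alpha}$ with $\sum_{k}(\alpha_{k}/\alpha_{+})\delta_{\alpha+e_{k}}$ exactly as in Corollary~\ref{cor:nonid}. Your boundedness check on $\prod_{n}(\sum_{k}\theta_{k}\beta_{k,w_{n}})$ and your remark about degenerate $\beta$ are harmless extras the paper leaves implicit.
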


\begin{proof}
For fixed $\beta$ and $w$, the integrand $\prod_{n=1}^{N}(\sum_{k=1}^{K}\theta_{k}\beta_{k,w_{n}})$
depends on $\alpha$ only through the Dirichlet density $f_{\alpha}(\theta)$.
Integrating the shift identity of Theorem~\ref{thm:shift} over the
topic simplex therefore gives the stated equality. The non-identifiability
of the unrestricted finite mixture class is then proved exactly as in Corollary~\ref{cor:nonid};
the same argument also applies to any parameter set containing one full
unit-shift orbit $\{\alpha+e_{k}:1\le k\le K\}$ together with $\alpha$. 
\end{proof}
Following \citet{fan2013bl}, the Beta-Liouville density on the simplex
$\Delta_{J-1}^{\circ}$, with parameters $a_{1},\dots,a_{J-1}>0$,
$a>0$, and $b>0$, is 
\[
b_{a_{1},\dots,a_{J-1},a,b}(x)=\frac{\Gamma(a+b)\Gamma(\sum_{j=1}^{J-1}a_{j})}{\Gamma(a)\Gamma(b)\prod_{j=1}^{J-1}\Gamma(a_{j})}\left(\prod_{j=1}^{J-1}x_{j}^{a_{j}-1}\right)\left(\sum_{j=1}^{J-1}x_{j}\right)^{a-\sum_{j=1}^{J-1}a_{j}}x_{J}^{b-1}.
\]
Its unrestricted mixture class inherits the same non-identifiability
by inclusion of the Dirichlet subfamily. 
\begin{prop}
\label{prop:BL_nonid} The class of unrestricted finite Beta-Liouville
mixtures is not identifiable. 
\end{prop}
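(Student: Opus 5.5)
The plan is to mirror the proof of Proposition~\ref{prop:GD_nonid}: show that the Dirichlet family embeds in the Beta-Liouville family, then invoke Corollary~\ref{cor:nonid}.

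Fix $\alpha\in\mathbb{R}_{>0}^{J}$ and choose the Beta-Liouville parameters
\[
a_{j}=\alpha_{j}\ (1\le j\le J-1),\qquad a=\sum_{j=1}^{J-1}\alpha_{j},\qquad b=\alpha_{J}.
\]
With this choice the exponent $a-\sum_{j=1}^{J-1}a_{j}$ is $0$. Hence the factor $(\sum_{j=1}^{J-1}x_{j})^{0}$ equals $1$ on $\Delta_{J-1}^{\circ}$, where $\sum_{j<J}x_{j}=1-x_{J}\in(0,1)$, so no $0^{0}$ issue arises. The remaining monomial is $\prod_{j=1}^{J-1}x_{j}^{\alpha_{j}-1}x_{J}^{\alpha_{J}-1}$.

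Next I check the normalizing constant. Since $a=\sum_{j<J}a_{j}$, the factors $\Gamma(\sum_{j<J}a_{j})$ and $\Gamma(a)$ cancel. We have $a+b=\alpha_{+}$, and $\Gamma(b)\prod_{j<J}\Gamma(a_{j})=\prod_{j=1}^{J}\Gamma(\alpha_{j})$. The constant therefore equals $\Gamma(\alpha_{+})/\prod_{j}\Gamma(\alpha_{j})$, so $b_{\alpha_{1},\dots,\alpha_{J-1},a,b}=f_{\alpha}$ on $\Delta_{J-1}^{\circ}$.

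It follows that every finite Dirichlet mixture is a finite Beta-Liouville mixture. Take the measures $G_{0}=\delta_{\alpha}$ and $G_{1}=\sum_{j}(\alpha_{j}/\alpha_{+})\delta_{\alpha+e_{j}}$ from Corollary~\ref{cor:nonid}. Push them forward through the parameter map $\alpha\mapsto(\alpha_{1},\dots,\alpha_{J-1},\sum_{j<J}\alpha_{j},\alpha_{J})$.

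The only point needing care is that this map is injective, since $\alpha$ is recovered from the first $J-1$ coordinates and the last coordinate. The $J$ atoms $\alpha+e_{j}$ therefore remain distinct Beta-Liouville parameters, and both image measures are distinct one-atom and $J$-atom mixing measures. They induce the same density by Theorem~\ref{thm:shift}, which gives non-identifiability.

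No step is a real obstacle. The main thing to verify carefully is the telescoping of the gamma constants and the injectivity just described.
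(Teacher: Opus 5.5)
Your proposal is correct and matches the paper's proof: set $a=\sum_{j=1}^{J-1}a_{j}$ so the middle factor is $1$ and the normalizing constant reduces to the Dirichlet one with parameter $(a_{1},\dots,a_{J-1},b)$, then invoke Corollary~\ref{cor:nonid}. Your explicit check that the parameter embedding is injective, so the atoms $\alpha+e_{j}$ stay distinct as Beta-Liouville parameters, is a detail the paper leaves implicit.
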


\begin{proof}
If $a=\sum_{j=1}^{J-1}a_{j}$, then the middle factor is identically
$1$, and the normalizing constant reduces to 
\[
\frac{\Gamma(\sum_{j=1}^{J-1}a_{j}+b)}{\prod_{j=1}^{J-1}\Gamma(a_{j})\Gamma(b)}.
\]
Hence 
\[
b_{a_{1},\dots,a_{J-1},a,b}(x)=\frac{\Gamma(\sum_{j=1}^{J-1}a_{j}+b)}{\prod_{j=1}^{J-1}\Gamma(a_{j})\Gamma(b)}\prod_{j=1}^{J-1}x_{j}^{a_{j}-1}x_{J}^{b-1},
\]
which is exactly the Dirichlet density with parameter $(a_{1},\dots,a_{J-1},b)$.
Therefore the unrestricted Beta-Liouville family contains the unrestricted
Dirichlet family as a subfamily, and Corollary~\ref{cor:nonid} implies
the claim. 
\end{proof}
Following \citet{hu2019ibl}, the inverted Beta-Liouville density
on $\mathbb{R}_{>0}^{J-1}$, with parameters $a_{1},\dots,a_{J-1}>0$,
$a>0$, $b>0$, and $\lambda>0$, is 
\[
\tilde{b}_{a_{1},\dots,a_{J-1},a,b,\lambda}(y)=\frac{\Gamma(\sum_{j=1}^{J-1}a_{j})\Gamma(a+b)}{\Gamma(a)\Gamma(b)\prod_{j=1}^{J-1}\Gamma(a_{j})}\lambda^{b}\left(\prod_{j=1}^{J-1}y_{j}^{a_{j}-1}\right)\left(\sum_{j=1}^{J-1}y_{j}\right)^{a-\sum_{j=1}^{J-1}a_{j}}\left(\lambda+\sum_{j=1}^{J-1}y_{j}\right)^{-(a+b)}.
\]
This family contains the inverted Dirichlet family as a special case,
so Remark~\ref{rem:inverted_transfer} yields the corresponding consequence
on the positive orthant. 
\begin{prop}
\label{prop:IBL_nonid} The class of unrestricted finite inverted
Beta-Liouville mixtures is not identifiable. 
\end{prop}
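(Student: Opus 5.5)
The plan is to embed the inverted Dirichlet family inside the inverted Beta-Liouville family, exactly as Proposition~\ref{prop:BL_nonid} embeds the Dirichlet family inside the Beta-Liouville family. The non-identifiability of inverted Dirichlet mixtures then follows from Corollary~\ref{cor:nonid} together with Remark~\ref{rem:inverted_transfer}.

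\textbf{Step 1: specialize the parameters.} In $\tilde{b}_{a_{1},\dots,a_{J-1},a,b,\lambda}$, set $\lambda=1$ and $a=\sum_{j=1}^{J-1}a_{j}$. The middle factor $(\sum_j y_j)^{a-\sum_j a_j}$ becomes identically $1$, and $\lambda^{b}=1$. The normalizing constant then reduces to
\[
\frac{\Gamma(\sum_{j=1}^{J-1}a_{j}+b)}{\prod_{j=1}^{J-1}\Gamma(a_{j})\,\Gamma(b)}.
\]
Comparing with \eqref{eq:transported_kernel} and writing $\alpha=(a_{1},\dots,a_{J-1},b)$, so that $\alpha_{+}=\sum_j a_j+b$, the density becomes
\[
\frac{\Gamma(\alpha_{+})}{\prod_{j=1}^{J}\Gamma(\alpha_{j})}\,\frac{\prod_{j=1}^{d}y_{j}^{\alpha_{j}-1}}{(1+Y)^{\alpha_{+}}}=h_{\alpha}(y).
\]
As $\alpha$ ranges over $\mathbb{R}_{>0}^{J}$, every transported kernel $h_{\alpha}$ therefore appears as an inverted Beta-Liouville density.

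\textbf{Step 2: transfer the shift identity.} Remark~\ref{rem:inverted_transfer} carries Theorem~\ref{thm:shift} over to the orthant. The same conclusion can be seen directly: multiply the shift identity by the $\alpha$-free Jacobian $(1+Y)^{-J}$ from Lemma~\ref{lem:chart_jac}. This gives
\[
h_{\alpha}=\sum_{j=1}^{J}\frac{\alpha_{j}}{\alpha_{+}}\,h_{\alpha+e_{j}}\qquad\text{on }\mathbb{R}_{>0}^{d}.
\]
Each $\alpha+e_{j}$ again lies in $\mathbb{R}_{>0}^{J}$, so each $h_{\alpha+e_{j}}$ is an inverted Beta-Liouville density by Step~1.

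\textbf{Step 3: conclude.} The one-atom mixture $\delta_{\alpha}$ and the $J$-atom mixture $\sum_{j}(\alpha_{j}/\alpha_{+})\,\delta_{\alpha+e_{j}}$ are distinct mixing measures. Mapped through Step~1, their atoms are distinct inverted Beta-Liouville parameter vectors $(a_{1},\dots,a_{J-1},a,b,\lambda)$. By Step~2 the two measures induce the same density. This is exactly the non-identifiability claimed in Proposition~\ref{prop:IBL_nonid}.

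The only step needing care is checking that distinct Dirichlet atoms give distinct inverted Beta-Liouville parameter vectors. This holds because the map $\alpha\mapsto(\alpha_{1},\dots,\alpha_{J-1},\sum_{j<J}\alpha_j,\alpha_J,1)$ is injective. I expect no real obstacle beyond this bookkeeping and the simplification of the Gamma constants.
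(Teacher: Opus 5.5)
Your proposal is correct and matches the paper's proof: you specialize to $a=\sum_{j=1}^{J-1}a_j$ and $\lambda=1$ to recover the inverted Dirichlet kernel $h_{\gamma}$ with $\gamma=(a_1,\dots,a_{J-1},b)$, and then transfer Corollary~\ref{cor:nonid} via Remark~\ref{rem:inverted_transfer}. You also spell out two steps the paper leaves implicit --- the transported shift identity $h_\alpha=\sum_{j}(\alpha_j/\alpha_+)h_{\alpha+e_j}$ and the injectivity of the parameter embedding --- and both check out.
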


\begin{proof}
If $a=\sum_{j=1}^{J-1}a_{j}$ and $\lambda=1$, then 
\[
\tilde{b}_{a_{1},\dots,a_{J-1},a,b,1}(y)=\frac{\Gamma(\sum_{j=1}^{J-1}a_{j}+b)}{\Gamma(b)\prod_{j=1}^{J-1}\Gamma(a_{j})}\frac{\prod_{j=1}^{J-1}y_{j}^{a_{j}-1}}{(1+\sum_{j=1}^{J-1}y_{j})^{\sum_{j=1}^{J-1}a_{j}+b}}.
\]
This is precisely the inverted Dirichlet density $h_{\gamma}(y)$
from \eqref{eq:transported_kernel}, with 
\[
\gamma=(a_{1},\dots,a_{J-1},b).
\]
By Remark~\ref{rem:inverted_transfer}, non-identifiability of unrestricted
finite Dirichlet mixtures transfers to unrestricted finite inverted
Dirichlet mixtures. Since the inverted Beta-Liouville family contains
that inverted Dirichlet subfamily, unrestricted finite inverted Beta-Liouville
mixtures are likewise non-identifiable. 
\end{proof}
\begin{rem}
The propositions above are all one-way consequences of the Dirichlet
results proved in this paper. They establish non-identifiability for
the unrestricted generalized Dirichlet, Dirichlet-multinomial, fixed-topic-matrix LDA,
Beta-Liouville, and inverted Beta-Liouville classes because these
models either contain the Dirichlet (or inverted Dirichlet) family
as a subfamily or depend linearly on the Dirichlet density. By contrast,
the positive identifiability results of Theorems~\ref{thm:fixedA},
\ref{thm:linind_FD}, and \ref{thm:K_less_than_J_ident} do not automatically
extend to the full parameter spaces of the generalized Dirichlet,
Beta-Liouville, or inverted Beta-Liouville families. A natural direction
for future work is to identify structured subclasses of these richer
families whose induced Dirichlet atoms remain inside the identifiable
regions described in Section~\ref{sec:Identifiable-restrictions},
or to determine whether analogues of Theorem~\ref{thm:K_less_than_J_ident}
hold when the number of mixture components is small. 
\end{rem}

\bibliographystyle{plainnat}
\bibliography{dirichlet_bib}

\begin{thebibliography}{25}
\providecommand{\natexlab}[1]{#1}
\providecommand{\url}[1]{\texttt{#1}}
\expandafter\ifx\csname urlstyle\endcsname\relax
  \providecommand{\doi}[1]{doi: #1}\else
  \providecommand{\doi}{doi: \begingroup \urlstyle{rm}\Url}\fi

\bibitem[Ahmad and Al-Hussaini(1982)]{AhmadAlHussaini1982}
Khalaf~E. Ahmad and Essam~K. Al-Hussaini.
\newblock Remarks on the non-identifiability of mixtures of distributions.
\newblock \emph{Annals of the Institute of Statistical Mathematics}, 34\penalty0 (1):\penalty0 543--544, 1982.

\bibitem[Blei et~al.(2003)Blei, Ng, and Jordan]{blei2003lda}
David~M. Blei, Andrew~Y. Ng, and Michael~I. Jordan.
\newblock Latent {Dirichlet} allocation.
\newblock \emph{Journal of Machine Learning Research}, 3:\penalty0 993--1022, 2003.

\bibitem[Bouguila(2008)]{bouguila2008gdm}
Nizar Bouguila.
\newblock Clustering of count data using generalized {Dirichlet} multinomial distributions.
\newblock \emph{IEEE Transactions on Knowledge and Data Engineering}, 20\penalty0 (4):\penalty0 462--474, 2008.

\bibitem[Bouguila et~al.(2004)Bouguila, Ziou, and Vaillancourt]{bouguila2004unsupervised}
Nizar Bouguila, Djemel Ziou, and Jean Vaillancourt.
\newblock Unsupervised learning of a finite mixture model based on the {Dirichlet} distribution and its application.
\newblock \emph{IEEE Transactions on Image Processing}, 13\penalty0 (11):\penalty0 1533--1543, 2004.

\bibitem[Chen(2023)]{chen2023statistical}
Jiahua Chen.
\newblock \emph{Statistical Inference Under Mixture Models}.
\newblock Springer, Singapore, 2023.

\bibitem[Connor and Mosimann(1969)]{ConnorMosimann1969}
Richard~J. Connor and James~E. Mosimann.
\newblock Concepts of independence for proportions with a generalization of the {Dirichlet} distribution.
\newblock \emph{Journal of the American Statistical Association}, 64\penalty0 (325):\penalty0 194--206, 1969.

\bibitem[Fan and Bouguila(2013)]{fan2013bl}
Wentao Fan and Nizar Bouguila.
\newblock Learning finite {Beta-Liouville} mixture models via variational {Bayes} for proportional data clustering.
\newblock In Francesca Rossi, editor, \emph{Proceedings of the Twenty-Third International Joint Conference on Artificial Intelligence ({IJCAI} 2013)}, pages 1323--1329, Beijing, China, 2013. IJCAI/AAAI.

\bibitem[Fan et~al.(2012)Fan, Bouguila, and Ziou]{fan2012variational}
Wentao Fan, Nizar Bouguila, and Djemel Ziou.
\newblock Variational learning for finite {Dirichlet} mixture models and applications.
\newblock \emph{IEEE Transactions on Neural Networks and Learning Systems}, 23\penalty0 (5):\penalty0 762--774, 2012.

\bibitem[Fan et~al.(2016)Fan, Sallay, Bouguila, and Bourouis]{fan2016variational}
Wentao Fan, Hassen Sallay, Nizar Bouguila, and Sami Bourouis.
\newblock Variational learning of hierarchical infinite generalized {Dirichlet} mixture models and applications.
\newblock \emph{Soft Computing}, 20\penalty0 (3):\penalty0 979--990, 2016.

\bibitem[Graham et~al.(1994)Graham, Knuth, and Patashnik]{GrahamKnuthPatashnik1994}
Ronald~L. Graham, Donald~E. Knuth, and Oren Patashnik.
\newblock \emph{Concrete Mathematics: A Foundation for Computer Science}.
\newblock Addison-Wesley, Reading, MA, 2nd edition, 1994.

\bibitem[Gr{\"u}n et~al.(2012)Gr{\"u}n, Kosmidis, and Zeileis]{grun2012extended}
Bettina Gr{\"u}n, Ioannis Kosmidis, and Achim Zeileis.
\newblock Extended {Beta} regression in {R}: shaken, stirred, mixed, and partitioned.
\newblock \emph{Journal of Statistical Software}, 48\penalty0 (11):\penalty0 1--25, 2012.

\bibitem[Holmes et~al.(2012)Holmes, Harris, and Quince]{holmes2012dmm}
Ian Holmes, Kelly Harris, and Christopher Quince.
\newblock {Dirichlet} multinomial mixtures: generative models for microbial metagenomics.
\newblock \emph{PLOS ONE}, 7\penalty0 (2):\penalty0 e30126, 2012.

\bibitem[Houseman et~al.(2008)Houseman, Christensen, Yeh, Marsit, Karagas, Wrensch, Nelson, Wiemels, Zheng, Wiencke, and Kelsey]{houseman2008model}
E.~Andres Houseman, Brock~C. Christensen, Ru-Fang Yeh, Carmen~J. Marsit, Margaret~R. Karagas, Margaret Wrensch, Heather~H. Nelson, Joseph Wiemels, Shichun Zheng, John~K. Wiencke, and Karl~T. Kelsey.
\newblock Model-based clustering of {DNA} methylation array data: a recursive-partitioning algorithm for high-dimensional data arising as a mixture of {Beta} distributions.
\newblock \emph{BMC Bioinformatics}, 9:\penalty0 365, 2008.

\bibitem[Hu et~al.(2019)Hu, Fan, Du, and Bouguila]{hu2019ibl}
Can Hu, Wentao Fan, Ji-Xiang Du, and Nizar Bouguila.
\newblock A novel statistical approach for clustering positive data based on finite inverted {Beta-Liouville} mixture models.
\newblock \emph{Neurocomputing}, 333:\penalty0 110--123, 2019.

\bibitem[Ji et~al.(2005)Ji, Wu, Liu, Wang, and Coombes]{ji2005applications}
Yuan Ji, Chunlei Wu, Ping Liu, Jing Wang, and Kevin~R. Coombes.
\newblock Applications of {Beta}-mixture models in bioinformatics.
\newblock \emph{Bioinformatics}, 21\penalty0 (9):\penalty0 2118--2122, 2005.

\bibitem[Kotz et~al.(2019)Kotz, Balakrishnan, and Johnson]{kotz2019continuous}
Samuel Kotz, N.~Balakrishnan, and Norman~L. Johnson.
\newblock \emph{Continuous Multivariate Distributions, Volume 1: Models and Applications}.
\newblock Wiley, Hoboken, NJ, 2nd edition, 2019.

\bibitem[Krantz and Parks(2002)]{KrantzParks2002}
Steven~G. Krantz and Harold~R. Parks.
\newblock \emph{A Primer of Real Analytic Functions}.
\newblock Birkh{\"a}user Boston, Boston, MA, 2nd edition, 2002.

\bibitem[Lai and Schumaker(2007)]{LaiSchumaker2007}
Ming-Jun Lai and Larry~L. Schumaker.
\newblock \emph{Spline Functions on Triangulations}.
\newblock Cambridge University Press, Cambridge, 2007.

\bibitem[Ma and Leijon(2009)]{ma2009beta}
Zhanyu Ma and Arne Leijon.
\newblock {Beta} mixture models and the application to image classification.
\newblock In \emph{Proceedings of the 2009 16th {IEEE} International Conference on Image Processing ({ICIP})}, pages 2045--2048, Piscataway, NJ, 2009. IEEE.

\bibitem[McLachlan and Basford(1988)]{McLachlanBasford1988}
Geoffrey~J. McLachlan and Kaye~E. Basford.
\newblock \emph{Mixture Models: Inference and Applications to Clustering}.
\newblock Marcel Dekker, New York, 1988.

\bibitem[Pal and Heumann(2022)]{pal2022clustering}
Samyajoy Pal and Christian Heumann.
\newblock Clustering compositional data using {Dirichlet} mixture model.
\newblock \emph{PLOS ONE}, 17\penalty0 (5):\penalty0 e0268438, 2022.

\bibitem[Simone(2022)]{Simone2022}
Rosaria Simone.
\newblock On finite mixtures of discretized {Beta} model for ordered responses.
\newblock \emph{TEST}, 31\penalty0 (3):\penalty0 828--855, 2022.

\bibitem[Teicher(1963)]{teicher1963identifiability}
Henry Teicher.
\newblock Identifiability of finite mixtures.
\newblock \emph{Annals of Mathematical Statistics}, 34\penalty0 (4):\penalty0 1265--1269, 1963.

\bibitem[Titterington et~al.(1985)Titterington, Smith, and Makov]{TitteringtonSmithMakov1985}
D.~M. Titterington, A.~F.~M. Smith, and U.~E. Makov.
\newblock \emph{Statistical Analysis of Finite Mixture Distributions}.
\newblock John Wiley \& Sons, Chichester, 1985.

\bibitem[Wong(2010)]{wong2010gd}
Tzu-Tsung Wong.
\newblock Parameter estimation for generalized {Dirichlet} distributions from the sample estimates of the first and the second moments of random variables.
\newblock \emph{Computational Statistics \& Data Analysis}, 54\penalty0 (7):\penalty0 1756--1765, 2010.

\end{thebibliography}

\end{document}